\newtheorem{theorem}{Theorem}[section]
\newtheorem{lemma}[theorem]{Lemma}
\newtheorem{prop}[theorem]{Proposition}
\newtheorem{remark}{Remark}[section]
\newtheorem{corollary}[theorem]{Corollary}
\newenvironment{proof-sketch}{\noindent{\bf Sketch of Proof}\hspace*{1em}}{\qed\bigskip}
\newcommand{\RR}{\mathbb R}
\newcommand{\NN}{\mathbb N}
\renewcommand{\leq}{\leqslant}
\renewcommand{\geq}{\geqslant}
\begin{document}
\title[Analysis of a Robin quasilinear problem]{Positive solutions for the Robin $p$-Laplacian\\ plus an indefinite potential}
\author[N.S. Papageorgiou]{Nikolaos S. Papageorgiou}
\address[N.S. Papageorgiou]{National Technical University, Department of Mathematics,
				Zografou Campus, Athens 15780, Greece \& Institute of Mathematics, Physics and Mechanics, 1000 Ljubljana, Slovenia}
\email{\tt npapg@math.ntua.gr}
\author[V.D. R\u{a}dulescu]{Vicen\c{t}iu D. R\u{a}dulescu}
\address[V.D. R\u{a}dulescu]{Institute of Mathematics, Physics and Mechanics, 1000 Ljubljana, Slovenia \& Faculty of Applied Mathematics, AGH University of Science and Technology, 30-059 Krak\'ow, Poland \& Institute of Mathematics ``Simion Stoilow" of the Romanian Academy, P.O. Box 1-764, 014700 Bucharest, Romania}
\email{\tt vicentiu.radulescu@imfm.si}
\author[D.D. Repov\v{s}]{Du\v{s}an D. Repov\v{s}}
\address[D.D. Repov\v{s}]{Faculty of Education and Faculty of Mathematics and Physics, University of Ljubljana \& Institute of Mathematics, Physics and Mechanics, 1000 Ljubljana, Slovenia}
\email{\tt dusan.repovs@guest.arnes.si}
\keywords{Local minimizers, $p$-Laplacian,  strong comparison, positive solutions, nonlinear regularity, minimal solution, indefinite potential.\\
\phantom{aa} 2010 Mathematics Subject Classification: 35J20, 35J60}
\begin{abstract}
We consider a nonlinear elliptic equation driven by the Robin $p$-Laplacian plus an indefinite potential. In the reaction we have the competing effects of a strictly $(p-1)$-sublinear parametric term and of a $(p-1)$-linear and nonuniformly nonresonant term. We study the set of positive solutions as the parameter $\lambda>0$ varies. We prove a bifurcation-type result for large values of the positive parameter $\lambda$. Also, we show that for all admissible $\lambda>0$, the problem has a smallest positive solution $\overline{u}_\lambda$ and we study the monotonicity and continuity properties of the map $\lambda\mapsto\overline{u}_\lambda$.
\end{abstract}
\maketitle

\section{Introduction}

Let $\Omega\subseteq\RR^N$ be a bounded domain with a $C^2$-boundary $\partial\Omega$. In this paper we study the following nonlinear parametric Robin problem:
\begin{equation}
	\left\{
		\begin{array}{ll}
			-\Delta_p u(z) + \xi(z)u(z)^{p-1} = f(z,u(z),\lambda) + g(z,u(z))\ \mbox{in}\ \Omega, \\
			\frac{\partial u}{\partial n_p} + \beta(z)u(z)^{p-1}=0\ \mbox{on}\ \partial\Omega,\quad u>0\ \mbox{in}\ \Omega,\ \lambda>0.
		\end{array}
	\right\}
	\tag{$P_\lambda$}
	\label{eqp}
\end{equation}

In this problem, $\Delta_p$ denotes the p-Laplace differential operator defined by
$$
\Delta_p u={\rm div}\,(|Du|^{p-2}Du)\ \mbox{for all}\ u\in W^{1,p}(\Omega),\ 1<p<\infty.
$$

The potential function $\xi\in L^\infty(\Omega)$ is in general indefinite (that is, sign-changing). Therefore the differential operator (the left-hand side of \eqref{eqp}) need not be  coercive. In the reaction (the right-hand side of \eqref{eqp}), we have the competing effects of two terms.
The first is a parametric function which is strictly $(p-1)$-sublinear near $+\infty$. The second function (the perturbation of the parametric term), is $(p-1)$-linear near $+\infty$. Both functions are Carath\'eodory (that is, for all $x\in\RR$ the mappings $z\mapsto f(z,x,\lambda)$ and $x\mapsto g(z,x)$ are measurable and for all $z\in\Omega$ the functions $x\mapsto f(z,x,\lambda)$ and $x\mapsto g(z,x)$ are continuous). 

In the boundary condition, $\frac{\partial u}{\partial n_p}$ denotes the conormal derivative of $u$, defined by extension of the map
$$
C^1(\overline\Omega)\ni u\mapsto |Du|^{p-2}(Du,n)_{\RR^N} = |Du|^{p-2}\frac{\partial u}{\partial n},
$$
with $n(\cdot)$ being the outward unit normal on $\partial\Omega$. 
This map is uniformly continuous from $C^1(\overline\Omega)$ into $L^p(\partial\Omega)$ (in fact, it is locally Lipschitz if $p\geq2$ and H\"older continuous if $1<p<2$). Also, $C^1(\overline\Omega)$ is dense in $W^{1,p}(\Omega)$. So, this map admits a unique extension to the whole Sobolev space.

Our aim in this paper is to study the nonexistence, existence and multiplicity of positive solutions for problem \eqref{eqp} as the parameter $\lambda$ moves along the positive semiaxis $(0,+\infty)$.
We prove a bifurcation-type result for large values of the parameter (cf. Theorem~\ref{th14}). More precisely, we show that there is a critical parameter value $\lambda^*>0$ such that:\\
(i) for all $\lambda>\lambda^*$, problem \eqref{eqp} has at least two positive solutions;\\
(ii) for all $\lambda=\lambda^*$, problem \eqref{eqp} has at least one positive solution;\\
 (iii) for all $0<\lambda<\lambda^*$, problem \eqref{eqp} has no positive solutions.

Moreover, we show that for every admissible parameter $\lambda\in[\lambda^*,+\infty)$, problem \eqref{eqp} has a smallest positive solution $\overline{u}_\lambda$ and we examine the continuity and monotocicity properties of the map $\lambda\mapsto\overline{u}_\lambda$.

The first such bifurcation-type result for parametric elliptic equations with competing nonlinearities was proved by Ambrosetti, Brezis \& Cerami \cite{2} (semilinear Dirichlet problems with concave-convex reaction). Their work was extended to Dirichlet $p$-Laplace equations by Garcia Azorero, Manfredi \& Peral Alonso \cite{6}, Guo \& Zhang \cite{9}, Hu \& Papageorgiou \cite{10}. For equations of logistic type there are the works of R\u{a}dulescu \& Repov\v{s} \cite{19} (semilinear Dirichlet problems) and Cardinali, Papageorgiou \& Rubbioni \cite{4} (nonlinear Neumann problems).

 For Robin problems, we mention the work of Papageorgiou \& R\u{a}dulescu \cite{15}. In all aforementioned works the differential operator is coercive and the reaction has a different pair of competing nonlinearities. In the present paper we distinguish a new class of competition phenomena, which lead to bifurcation-type results. In fact, the behaviour of the set of positive solutions as the parameter $\lambda>0$ varies, is similar to that of superdiffusive logistic equations, since the ``bifurcation" occurs for large values of $\lambda>0$.

Our method of proof uses variational tools from critical point theory together with suitable truncation, perturbation and comparison arguments.

\section{Mathematical background and hypotheses}

Suppose that $X$ is a Banach space. We denote by $X^*$  the topological dual of $X$ and by $\langle\cdot,\cdot\rangle$ the duality brackets for the pair $(X^*,X)$.

Given $\varphi\in C^1(X,\RR)$ we say that $\varphi$ satisfies the ``Palais-Smale condition" (the ``PS-condition" for short) if the following property holds:
\begin{equation*}
	\begin{array}{cc}
		\mbox{``Every sequence}\ \{u_n\}_{n\geq1}\subseteq X \mbox{such that}\\
		\{\varphi(u_n)\}_{n\geq1}\subseteq\RR\ \mbox{is bounded and}\ \varphi'(u_n)\rightarrow 0\ \mbox{in}\ X^*\ \mbox{as}\ n\rightarrow\infty,  \\
		\mbox{admits a strongly convergent subsequence".}
	\end{array}
\end{equation*}

This is a compactness-type condition on the functional $\varphi$. Using this condition, one can prove a deformation theorem from which follows the minimax theory for the critical values of $\varphi$. Prominent in this theory is the so-called ``mountain pass theorem", which we recall here because we will use it in the sequel.

\begin{theorem}\label{th1}
		Assume that $\varphi\in C^1(X,\RR)$ satisfies the PS-condition, $u_0,\,u_1\in X$, $||u_1-u_0||>\rho>0$,
	$$\max\{\varphi(u_0),\varphi(u_1)\}<\inf\{\varphi(u):||u-u_0||=\rho\}=m_{\rho}$$
	and $c=\inf\limits_{\gamma\in\Gamma}\max\limits_{0\leq t\leq 1}\ \varphi(\gamma(t))$, where $$\Gamma=\{\gamma\in C([0,1],X):\gamma(0)=u_0,\gamma(1)=u_1\}.$$ Then $c\geq m_{\rho}$ and $c$ is a critical value of $\varphi$ (that is, we can find $\hat{u}\in X$ such that $\varphi'(\hat{u})=0$ and $\varphi(\hat{u})=c$).
\end{theorem}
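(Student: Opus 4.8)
The plan is to prove the two assertions separately: the mountain-pass inequality $c\ge m_\rho$, which is purely topological, and the criticality of $c$, where the PS-condition enters through a deformation argument. First note that $\Gamma\ne\emptyset$ (the segment $t\mapsto(1-t)u_0+tu_1$ belongs to it) and that, by continuity of $\varphi$, the number $\max_{0\le t\le 1}\varphi(\gamma(t))$ is finite for each $\gamma\in\Gamma$, so $c<+\infty$.

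For the inequality, the key observation is that the sphere $S_\rho=\{u\in X:||u-u_0||=\rho\}$ separates $u_0$ from $u_1$. Indeed $||u_0-u_0||=0<\rho$ while $||u_1-u_0||>\rho$, so for every $\gamma\in\Gamma$ the continuous function $t\mapsto||\gamma(t)-u_0||$ equals $0$ at $t=0$ and exceeds $\rho$ at $t=1$; by the intermediate value theorem there is $t_\gamma\in(0,1)$ with $\gamma(t_\gamma)\in S_\rho$. Hence $\max_{0\le t\le 1}\varphi(\gamma(t))\ge\varphi(\gamma(t_\gamma))\ge m_\rho$, and taking the infimum over $\gamma\in\Gamma$ gives $c\ge m_\rho$.

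For the criticality I would argue by contradiction, supposing that $c$ is a regular value, that is, $K_c:=\{u\in X:\varphi'(u)=0,\ \varphi(u)=c\}=\emptyset$. Writing $\varphi^a=\{u\in X:\varphi(u)\le a\}$ for sublevel sets, the engine of the proof is the quantitative deformation lemma: since $\varphi$ satisfies PS and $K_c=\emptyset$, there exist $\varepsilon\in\bigl(0,\tfrac12(c-\max\{\varphi(u_0),\varphi(u_1)\})\bigr)$ and a continuous map $\eta:X\to X$ that is the identity off $\varphi^{-1}([c-2\varepsilon,c+2\varepsilon])$ and satisfies $\eta(\varphi^{c+\varepsilon})\subseteq\varphi^{c-\varepsilon}$. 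Granting this, pick by the definition of the infimum a path $\gamma\in\Gamma$ with $\max_{t}\varphi(\gamma(t))\le c+\varepsilon$, and set $\widehat\gamma=\eta\circ\gamma$. Because $\varphi(u_0),\varphi(u_1)<m_\rho\le c$ forces $\varphi(u_0),\varphi(u_1)<c-2\varepsilon$, the endpoints lie outside the strip and are fixed by $\eta$, so $\widehat\gamma\in\Gamma$; but then $\max_t\varphi(\widehat\gamma(t))\le c-\varepsilon<c$, contradicting the definition of $c$. Therefore $K_c\ne\emptyset$, i.e. $c$ is a critical value.

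The main obstacle is the deformation lemma itself, which I would establish via the negative pseudo-gradient flow. On the open set where $\varphi'\ne 0$ one constructs a locally Lipschitz pseudo-gradient field $v$ with $||v(u)||\le 2||\varphi'(u)||_{X^*}$ and $\langle\varphi'(u),v(u)\rangle\ge||\varphi'(u)||_{X^*}^2$; the PS-condition is exactly what guarantees a uniform lower bound $||\varphi'(u)||_{X^*}\ge\delta>0$ on the strip $\{c-2\varepsilon\le\varphi\le c+2\varepsilon\}$ for small $\varepsilon$, since otherwise a PS sequence at level $c$ would yield a point of $K_c$. Cutting $v$ off smoothly so that it vanishes away from the strip, normalizing, and integrating $\dot\sigma=-w(\sigma)$ produces a global flow along which $\varphi$ decreases at a controlled rate; the uniform bound then forces the level to drop by $2\varepsilon$ after a finite time $T$, and $\eta=\sigma(T,\cdot)$ is the required map. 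The two delicate points are the existence of the locally Lipschitz pseudo-gradient field in this non-Hilbert Banach setting and the extraction from PS of the quantitative lower bound on $||\varphi'||_{X^*}$, which is precisely what turns the abstract compactness of PS into a concrete uniform descent.
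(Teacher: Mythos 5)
The paper does not prove this statement: Theorem~\ref{th1} is the classical mountain pass theorem of Ambrosetti and Rabinowitz, recalled without proof as a tool for the later multiplicity argument. Your proof is the standard and correct one: the inequality $c\ge m_\rho$ follows from the intermediate value theorem applied to $t\mapsto\|\gamma(t)-u_0\|$, since $\|u_1-u_0\|>\rho$ forces every admissible path to cross the sphere $\{\|u-u_0\|=\rho\}$; and the criticality of $c$ follows by contradiction from the quantitative deformation lemma, with the PS-condition converting the absence of critical points at level $c$ into a uniform lower bound on $\|\varphi'\|_{X^*}$ in a strip around $c$, so that the truncated, normalized pseudo-gradient flow pushes an almost-optimal path below level $c-\varepsilon$ while fixing the endpoints (your choice $2\varepsilon<c-\max\{\varphi(u_0),\varphi(u_1)\}$ correctly keeps $u_0,u_1$ outside the deformed strip). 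The two points you flag as delicate --- the locally Lipschitz pseudo-gradient field on a general Banach space (built via a partition of unity subordinate to a suitable cover of $\{\varphi'\ne0\}$) and the extraction of the uniform bound from PS --- are exactly the standard technical ingredients, and your treatment of both is sound.
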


\begin{remark}
	We note that if $\varphi'=A+K$, with $A:X\rightarrow X^*$ a continuous map of type $(S)_+$ (that is, if $u_n\xrightarrow{w}u$ in $X$ and $\limsup_{n\rightarrow\infty}\langle A(u_n),u_n-u\rangle\leq0$, then $u_n\rightarrow u$ in $X$), and $K:X\rightarrow X^*$ is completely continuous (that is, if $u_n\xrightarrow{w}u$ in $X$, then $K(u_n)\rightarrow K(u)$ in $X^*$), then $\varphi$ satisfies the PS-condition (see Marano \& Papageorgiou \cite[Proposition 2.2]{13}). This is the case in our setting.
\end{remark}

The analysis of problem \eqref{eqp} involves the Sobolev space $W^{1,p}(\Omega)$, the Banach space $C^1(\overline\Omega)$, and the ``boundary" Lebesgue space $L^p(\partial\Omega)$.

We denote by $||\cdot||$ the norm of the Sobolev space $W^{1,p}(\Omega)$ defined by
$$
||u||=\left(||u||^p_p + ||Du||^p_p\right)^\frac{1}{p}\ \mbox{for all}\ u\in W^{1,p}(\Omega).
$$

The space $C^1(\overline\Omega)$ is an ordered Banach space with positive (order) cone
$$
C_+=\{u\in C^1(\overline\Omega):u(z)\geq0\ \mbox{for all}\ z\in\overline\Omega\}.
$$

This cone has a nonempty interior given by
$$
{\rm int}\,C_+=\left\{u\in C_+: u(z)>0\ \mbox{for all}\ z\in\Omega,\ \frac{\partial u}{\partial n}|_{\partial\Omega\cap u^{-1}(0)}<0\right\}.
$$

We will also use the set $D_+\subseteq C_+$ defined by
$$
D_+ = \{u\in C_+:u(z)>0\ \mbox{for all}\ z\in\overline\Omega\}.
$$

Evidently, $D_+$ is open in $C^1(\overline\Omega)$ and $D_+\subseteq {\rm int}\, C_+$. In fact, $D_+$ is the interior of $C_+$ when $C^1(\overline\Omega)$ is furnished with the coarser relative $C(\overline\Omega)$-norm topology.

On $\partial\Omega$ we introduce the $(N-1)$-dimensional Hausdorff (surface) measure $\sigma(\cdot)$. Using $\sigma(\cdot)$, we can define in the usual way the boundary Lebesgue spaces $L^q(\partial\Omega), 1\leq q\leq\infty$. From the theory of Sobolev spaces we know that there exists a unique continuous linear map $\gamma_0:W^{1,p}(\Omega)\rightarrow L^p(\partial\Omega)$, known as the ``trace map", such that
$$
\gamma_0(u)=u|_{\partial\Omega}\ \mbox{for all}\ u\in W^{1,p}(\Omega)\cap C(\overline\Omega).
$$

So, the trace map gives a meaning to the notion of ``boundary values" for any Sobolev function. The trace map is not surjective (in fact, ${\rm im}\,\gamma_0=W^{\frac{1}{p'},p}(\partial\Omega)$, with $\frac{1}{p}+\frac{1}{p'}=1$) and ${\rm ker}\,\gamma_0=W^{1,p}_0(\Omega)$. Moreover, $\gamma_0$ is a compact map into $L^q(\partial\Omega)$ for all $q\in[1,\frac{(N-1)p}{N-p})$ if $p<N$ and into $L^p(\partial\Omega)$ for all $1\leq q<\infty$ if $N\leq p$. In the sequel, for the sake of notational simplicity, we will drop the use of the trace map $\gamma_0$. All restrictions of Sobolev functions on $\partial\Omega$ are understood in the sense of traces.

Let $A:W^{1,p}(\Omega)\rightarrow W^{1,p}(\Omega)^*$ be the nonlinear map defined by
$$
\langle A(u),h\rangle = \int_\Omega|Du|^{p-2}(Du,Dh)_{\RR^N}dz\ \mbox{for all}\ u,h\in W^{1,p}(\Omega).
$$

In the next proposition, we have collected the main properties of this map (see Gasinski \& Papageorgiou \cite[p. 279]{8}).

\begin{prop}\label{prop2}
	The map $A(\cdot)$ is bounded (that is, it maps bounded sets to bounded sets), continuous, monotone (thus, maximal monotone, too) and of type $(S)_+$.\end{prop}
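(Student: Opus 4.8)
The plan is to verify the four asserted properties in turn, treating boundedness and continuity as routine consequences of H\"older's inequality and the continuity of the Nemytskii map attached to the vector field $a(\xi)=|\xi|^{p-2}\xi$, and reserving the bulk of the effort for the $(S)_+$ property. For boundedness, I would estimate, for $u,h\in W^{1,p}(\Omega)$,
$$|\langle A(u),h\rangle|\le\int_\Omega|Du|^{p-1}|Dh|\,dz\le\|Du\|_p^{p-1}\|Dh\|_p\le\|u\|^{p-1}\|h\|$$
via H\"older's inequality with exponents $p'=p/(p-1)$ and $p$, so that $\|A(u)\|_*\le\|u\|^{p-1}$ and bounded sets are mapped to bounded sets. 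For continuity, if $u_n\to u$ in $W^{1,p}(\Omega)$ then $Du_n\to Du$ in $L^p(\Omega,\RR^N)$; since $\xi\mapsto|\xi|^{p-2}\xi$ is continuous with the growth $\big||\xi|^{p-2}\xi\big|=|\xi|^{p-1}$, the associated Nemytskii map is continuous from $L^p(\Omega,\RR^N)$ into $L^{p'}(\Omega,\RR^N)$, whence $\big\||Du_n|^{p-2}Du_n-|Du|^{p-2}Du\big\|_{p'}\to0$ and therefore $\|A(u_n)-A(u)\|_*\to0$.

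Monotonicity would follow from the elementary pointwise inequality $(|\xi|^{p-2}\xi-|\eta|^{p-2}\eta,\xi-\eta)_{\RR^N}\ge0$ valid for all $\xi,\eta\in\RR^N$; integrating it with $\xi=Du$ and $\eta=Dv$ yields $\langle A(u)-A(v),u-v\rangle\ge0$. Since $A$ is in addition everywhere defined and continuous (hence demicontinuous), maximal monotonicity is automatic from the standard characterization of maximal monotone operators.

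The main obstacle is the $(S)_+$ property, and this is where I would spend most of the argument. Starting from $u_n\xrightarrow{w}u$ in $W^{1,p}(\Omega)$ and $\limsup_n\langle A(u_n),u_n-u\rangle\le0$, monotonicity gives $\langle A(u),u_n-u\rangle\le\langle A(u_n),u_n-u\rangle$; the left-hand side tends to $0$ by weak convergence, so in fact $\langle A(u_n),u_n-u\rangle\to0$, and consequently $\langle A(u_n)-A(u),u_n-u\rangle\to0$, i.e.
$$\int_\Omega\big(|Du_n|^{p-2}Du_n-|Du|^{p-2}Du,\,Du_n-Du\big)_{\RR^N}\,dz\to0.$$
To convert this into $Du_n\to Du$ in $L^p$, I would invoke the strong monotonicity inequalities for the $p$-Laplacian vector field. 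For $p\ge2$ one has $(|\xi|^{p-2}\xi-|\eta|^{p-2}\eta,\xi-\eta)\ge c_p|\xi-\eta|^p$, which immediately bounds $\|Du_n-Du\|_p^p$ by the vanishing integral.

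The delicate case is $1<p<2$, where only the degenerate estimate $(|\xi|^{p-2}\xi-|\eta|^{p-2}\eta,\xi-\eta)\ge c_p|\xi-\eta|^2/(|\xi|+|\eta|)^{2-p}$ is available; there I would recover $\|Du_n-Du\|_p\to0$ by a H\"older interpolation, writing $|Du_n-Du|^p$ as the product of $|Du_n-Du|^p/(|Du_n|+|Du|)^{(2-p)p/2}$ and $(|Du_n|+|Du|)^{(2-p)p/2}$ and applying H\"older with exponents $2/p$ and $2/(2-p)$, the first factor being controlled by the vanishing integral and the second by the uniform bound on $\|Du_n\|_p$. Finally, since the embedding $W^{1,p}(\Omega)\hookrightarrow L^p(\Omega)$ is compact, $u_n\xrightarrow{w}u$ forces $u_n\to u$ in $L^p(\Omega)$, and combining this with $Du_n\to Du$ in $L^p(\Omega,\RR^N)$ yields $u_n\to u$ in $W^{1,p}(\Omega)$, which completes the verification of the $(S)_+$ property and thus of the proposition.
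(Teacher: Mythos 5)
Your proof is correct, and the paper itself offers no proof of this proposition: it is quoted as a known result with a reference to Gasinski \& Papageorgiou \cite{8}, where the argument given is essentially the one you wrote (H\"older for boundedness, Nemytskii continuity, the pointwise monotonicity inequalities for $\xi\mapsto|\xi|^{p-2}\xi$ split into the cases $p\geq 2$ and $1<p<2$, and the H\"older interpolation to recover strong $L^p$-convergence of the gradients in the singular case). All steps check out, including the key reduction $\langle A(u_n)-A(u),u_n-u\rangle\rightarrow 0$ from monotonicity plus the $\limsup$ hypothesis, so nothing needs to be changed.
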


 Now we introduce our conditions on the potential function $\xi(\cdot)$ and on the boundary coefficient $\beta(\cdot)$:

\smallskip
$H(\xi):$ $\xi\in L^\infty(\Omega)$

\smallskip
$H(\beta):$ $\beta\in C^{0,\alpha}(\partial\Omega)$ for some $0<\alpha<1$ and $\beta(z)\geq0$ for all $z\in\partial\Omega$.

\begin{remark}
	When $\beta\equiv0$, we have the Neumann problem.
\end{remark}

Let $\gamma_p: W^{1,p}(\Omega)\rightarrow\RR$ be the $C^1$-functional defined by
$$
\gamma_p(u)=||Du||^p_p + \int_\Omega\xi(z)|u|^pdz + \int_{\partial\Omega}\beta(z)|u|^pd\sigma\ \mbox{for all}\ u\in W^{1,p}(\Omega).
$$

Also, let $f_0:\Omega\times\RR\rightarrow\RR$ be a Carath\'eodory function that satisfies
$$
|f_0(z,x)|\leq a(z)(1+|x|^{r-1})\ \mbox{for almost all}\ z\in\Omega\ \mbox{and all}\ x\in\RR,
$$
with $a_0\in L^\infty(\Omega), 1<r\leq p^*=\left\{\begin{array}{ll}
\frac{Np}{N-p} & \mbox{if}\ p<N\\
+\infty & \mbox{if}\ N\leq p
\end{array}\right.$ (the critical Sobolev exponent).

 We set $F_0(z,x)=\int^x_0f_0(z,s)ds$ and consider the $C^1$-functional $\varphi_0:W^{1,p}(\Omega)\rightarrow\RR$ defined by
$$
\varphi_0(u) = \frac{1}{p}\gamma_p(u) - \int_\Omega F_0(z,u)dz\ \mbox{for all}\ u\in W^{1,p}(\Omega).
$$

In the framework of variational methods, the local minimizers of $\varphi_0$ play an important role. As we will see in the sequel, solutions of the problem are often generated by minimizing $\varphi_0$ on a constrained set defined by using the usual pointwise order on $W^{1,p}(\Omega)$ (this is done via truncation of $f_0(z,\cdot)$). It is well-known that the order cone $$W_+=\{u\in W^{1,p}(\Omega):u(z)\geq0\ \mbox{for almost all}\ z\in\Omega\}$$ of $W^{1,p}(\Omega)$ has an empty interior. So, it is not clear if the constrained minimizer is in fact,
 an unconstrained local minimizer of $\varphi_0$ over all of $W^{1,p}(\Omega)$.

The next result is helpful in this direction. It is a special case of a more general result that can be found in Papageorgiou \& R\u{a}dulescu \cite{16}. The first to prove this relation between H\"{o}lder and Sobolev local minimizers were Brezis \& Nirenberg \cite{3}.

\begin{prop}\label{prop3}
Assume that $u_0\in W^{1,p}(\Omega)$ is a local $C^1(\overline\Omega)$-minimizer of $\varphi_0$, that is, there exists $\rho_0>0$ such that
	$$
	\varphi_0(u_0)\leq \varphi_0(u_0+h)\ \mbox{for all}\ h\in C^1(\overline\Omega)\ \mbox{with}\ ||h||_{C^1(\overline\Omega)}\leq\rho_0.
	$$
	Then $u_0\in C^{1,\vartheta}(\overline\Omega)$ with $\vartheta\in(0,1)$ and $u_0$ is also a local $W^{1,p}(\Omega)$-minimizer of $\varphi_0$, that is, there exists $\rho_1>0$ such that
	$$
	\varphi_0(u_0)\leq \varphi_0(u_0+h)\ \mbox{for all}\ h\in W^{1,p}(\Omega)  \ \mbox{with}\ ||h||\leq\rho_1.
	$$
\end{prop}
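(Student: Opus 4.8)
The plan is to argue in two stages: first establish the regularity $u_0\in C^{1,\vartheta}(\overline\Omega)$, and then run a Brezis--Nirenberg-type contradiction argument that upgrades local $C^1(\overline\Omega)$-minimality to local $W^{1,p}(\Omega)$-minimality. For the regularity stage, I would first note that since $u_0$ is a local $C^1(\overline\Omega)$-minimizer, the G\^ateaux derivative of $\varphi_0$ at $u_0$ vanishes along every direction $h\in C^1(\overline\Omega)$; by density of $C^1(\overline\Omega)$ in $W^{1,p}(\Omega)$ and continuity of $\varphi_0'$ this forces $\varphi_0'(u_0)=0$. Equivalently, $u_0$ is a weak solution of the Robin problem
$$
-\Delta_p u_0+\xi(z)|u_0|^{p-2}u_0=f_0(z,u_0)\ \mbox{in}\ \Omega,\qquad \frac{\partial u_0}{\partial n_p}+\beta(z)|u_0|^{p-2}u_0=0\ \mbox{on}\ \partial\Omega.
$$
The subcritical growth bound on $f_0$ (recall $r\leq p^*$) combined with a standard bootstrap/Moser iteration yields $u_0\in L^\infty(\Omega)$, and then the nonlinear global regularity theory (Lieberman-type estimates for the $p$-Laplacian with conormal/Robin boundary conditions, using $H(\xi)$ and $H(\beta)$) gives $u_0\in C^{1,\vartheta}(\overline\Omega)$ for some $\vartheta\in(0,1)$.

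For the second stage I would argue by contradiction: suppose $u_0$ is \emph{not} a local $W^{1,p}(\Omega)$-minimizer. Then for every $\rho>0$ the constrained value
$$
m_\rho=\inf\{\varphi_0(u):||u-u_0||\leq\rho\}
$$
satisfies $m_\rho<\varphi_0(u_0)$. Since $\varphi_0$ is sequentially weakly lower semicontinuous (the map $A$ of Proposition~\ref{prop2} contributes a convex continuous term and the remaining terms pass to the limit) and the closed ball is weakly compact in the reflexive space $W^{1,p}(\Omega)$, the infimum is attained at some $u_\rho$ with $\varphi_0(u_\rho)=m_\rho<\varphi_0(u_0)$; in particular $u_\rho\neq u_0$ and $||u_\rho-u_0||=\rho$. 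The Lagrange multiplier rule then produces $\mu_\rho\leq0$ with
$$
\varphi_0'(u_\rho)=\mu_\rho\,\Psi'(u_\rho),\qquad \Psi(u)=\frac{1}{p}||u-u_0||^p,
$$
so that $u_\rho$ solves a perturbed Robin problem whose principal part is still a Leray--Lions operator of $p$-Laplacian type and whose right-hand side (incorporating $f_0(z,u_\rho)$, the potential term, and the multiplier correction involving the fixed $C^{1,\vartheta}$ function $u_0$) stays bounded in $L^\infty(\Omega)$ uniformly in $\rho\in(0,1]$.

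The crux is to show $u_\rho\to u_0$ in $C^1(\overline\Omega)$ as $\rho\to0^+$. From $||u_\rho-u_0||=\rho\to0$ I get $u_\rho\to u_0$ in $W^{1,p}(\Omega)$; together with the uniform $L^\infty$-bound and the subcritical growth of $f_0$, the Lieberman estimates should deliver a uniform bound $||u_\rho||_{C^{1,\tau}(\overline\Omega)}\leq M$ for some $\tau\in(0,1)$ and $M>0$ independent of $\rho$. Since the embedding $C^{1,\tau}(\overline\Omega)\hookrightarrow C^1(\overline\Omega)$ is compact, along a subsequence $u_\rho\to\tilde u$ in $C^1(\overline\Omega)$, and the $W^{1,p}$-convergence forces $\tilde u=u_0$; hence $u_\rho\to u_0$ in $C^1(\overline\Omega)$ (a subsequence-free conclusion by uniqueness of the limit). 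Then for $\rho$ small enough $||u_\rho-u_0||_{C^1(\overline\Omega)}\leq\rho_0$ while $\varphi_0(u_\rho)<\varphi_0(u_0)$, contradicting the assumed local $C^1(\overline\Omega)$-minimality of $u_0$.

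The main obstacle is precisely the uniform-in-$\rho$ global $C^{1,\tau}$ estimate. One must check that the Lagrange-multiplier perturbation $\mu_\rho\Psi'(u_\rho)$ does not destroy the structural hypotheses of the regularity theorem (the principal part remains uniformly elliptic of the correct $p$-growth type, and the boundary term stays compatible with $H(\beta)$) and, more delicately, that \emph{all} constants can be chosen independent of $\rho$. This is where the subcriticality $r\leq p^*$ of $f_0$, the $L^\infty$-bound on $\xi$, and the H\"older regularity of $\beta$ are indispensable: they guarantee a uniform $L^\infty$-bound on the full right-hand side, which is the input the Lieberman estimates convert into uniform $C^{1,\tau}$ control. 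Everything else in the argument is a routine combination of weak lower semicontinuity, the multiplier rule, and compact embeddings.
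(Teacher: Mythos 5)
The paper does not actually prove Proposition \ref{prop3}: it is quoted as a special case of a result in Papageorgiou \& R\u{a}dulescu \cite{16}, with the idea going back to Brezis \& Nirenberg \cite{3}. Your proposal reconstructs precisely that standard argument (critical point $\Rightarrow$ weak solution of the Robin problem $\Rightarrow$ $L^\infty$ by Moser iteration $\Rightarrow$ $C^{1,\vartheta}(\overline\Omega)$ by Lieberman; then the contradiction scheme with constrained minimization on small $W^{1,p}$-balls, a Lagrange multiplier, uniform $C^{1,\tau}$ estimates and the compact embedding into $C^1(\overline\Omega)$), so in architecture it coincides with the proof the paper points to.

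Two steps need more care than you give them. First, your existence step for the constrained minimizer $u_\rho$ rests on sequential weak lower semicontinuity of $\varphi_0$ on the closed $W^{1,p}$-ball; the term $-\int_\Omega F_0(z,u)\,dz$ passes to the weak limit only when $r<p^*$ (compact Sobolev embedding), whereas the proposition as stated allows $r=p^*$. At criticality the infimum over the $W^{1,p}$-ball need not be attained and the argument as written breaks; the proofs in the literature either assume subcritical growth or minimize over a ball for the norm $\|\cdot\|+\|\cdot\|_{\infty}$ in $W^{1,p}(\Omega)\cap L^\infty(\Omega)$ precisely to restore compactness. Second, the multiplier identity $\varphi_0'(u_\rho)=\mu_\rho\Psi'(u_\rho)$ with $\Psi(u)=\frac{1}{p}\|u-u_0\|^p$ contributes the term $-\mu_\rho\Delta_p(u_\rho-u_0)$, which is second order in $u_\rho$ and cannot be pushed into an $L^\infty$ right-hand side: it must stay in the principal part, and since $\mu_\rho\leq 0$ is not a priori bounded you have to normalize (divide the equation by $1+|\mu_\rho|$) so that the resulting quasilinear operator satisfies Lieberman's structure conditions with constants independent of $\rho$; this is exactly where the already established bound on $Du_0$ from the first stage enters. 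A minor slip: $\|u_\rho-u_0\|=\rho$ is not forced (the minimizer may be interior), but in that case $\mu_\rho=0$ and the argument only simplifies. With these repairs your plan is the standard proof.
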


As we have already mentioned in the first section of this paper, our approach involves also comparison arguments. The next proposition will be helpful in this direction. It is a special case of a more general result of Papageorgiou, R\u{a}dulescu \& Repov\v{s} \cite{18}.

\begin{prop}\label{prop4}
	Assume that $h_1,h_2,\vartheta\in L^\infty(\Omega), \vartheta(z)\geq0$ for almost all $z\in\Omega$,
	$$
	0<\eta\leq h_2(z)-h_1(z)\ \mbox{for almost all}\ z\in\Omega,
	$$
	and $u_1,u_2\in C^{1,\mu}(\overline\Omega)$ with $0<\mu\leq1$ are such that $u_1\leq u_2$ and
	$$
	\begin{array}{ll}
		-\Delta_p u_1 + \vartheta(z)|u_1|^{p-2}u_1 = h_1,\\
		-\Delta_p u_2 + \vartheta(z)|u_2|^{p-2}u_2 = h_2\ \mbox{for almost all}\ z\in\Omega.
	\end{array}
	$$
	Then $u_2-u_1\in {\rm int}\,C_+$.
\end{prop}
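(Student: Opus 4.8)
The plan is to prove that $v := u_2 - u_1$, which by hypothesis is nonnegative and (by the assumed $C^{1,\mu}$-regularity) belongs to $C^{1,\mu}(\overline\Omega)$, actually lies in $\mathrm{int}\,C_+$; concretely, I must show that $v(z) > 0$ for all $z \in \Omega$ and that $\frac{\partial v}{\partial n}(z_0) < 0$ at every $z_0 \in \partial\Omega$ with $v(z_0) = 0$. First I would eliminate the sign-indefiniteness of the zeroth-order terms. Setting $\rho := \max\{\|u_1\|_{C(\overline\Omega)},\|u_2\|_{C(\overline\Omega)}\}$ and fixing a constant $\hat\xi_\rho > \|\vartheta\|_\infty$, I rewrite the two equations as
$$-\Delta_p u_i + \hat\xi_\rho|u_i|^{p-2}u_i = h_i + (\hat\xi_\rho-\vartheta)|u_i|^{p-2}u_i =: \hat h_i \quad (i=1,2).$$
Since $x\mapsto|x|^{p-2}x$ is increasing, $u_1\leq u_2$, and $\hat\xi_\rho-\vartheta>0$, the extra term is monotone, so $\hat h_2-\hat h_1 \geq h_2-h_1 \geq \eta > 0$ a.e. in $\Omega$. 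Thus it suffices to argue for the operator $u\mapsto -\Delta_p u + \hat\xi_\rho|u|^{p-2}u$, whose zeroth-order part is now strictly monotone, while the strict gap $\eta$ between the right-hand sides is preserved.

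Next I would subtract the two reduced equations and linearize along the segments joining $u_1$ to $u_2$ and $Du_1$ to $Du_2$. Writing $a(\zeta)=|\zeta|^{p-2}\zeta$ and using the fundamental theorem of calculus, $v$ satisfies, in the weak sense,
$$-\mathrm{div}\,(A(z)\,Dv) + c(z)\,v = \hat h_2 - \hat h_1 \geq \eta > 0 \quad \mbox{in } \Omega,$$
where $A(z)=\int_0^1 Da\big((1-t)Du_1+tDu_2\big)\,dt$ (with $Da$ the Jacobian matrix of $a$) is symmetric and positive semidefinite, and $c(z)=\hat\xi_\rho(p-1)\int_0^1|(1-t)u_1+tu_2|^{p-2}\,dt \geq 0$. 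On any compact subset of $\Omega$ on which $|Du_1|+|Du_2|$ is bounded away from $0$, the matrix $A$ is both bounded and uniformly elliptic, so the classical strong maximum principle for the linear operator $w\mapsto -\mathrm{div}(A\,Dw)+c\,w$ (with $c\geq0$ and source $\geq \eta>0$) prevents $v$ from attaining the value $0$ there; and at a boundary point $z_0$ with $v(z_0)=0$ near which $|Du_1|+|Du_2|$ stays positive, Hopf's boundary point lemma for the same operator yields $\frac{\partial v}{\partial n}(z_0)<0$.

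The points left uncovered are those of the common critical set $\{Du_1=Du_2=0\}$, where $A$ loses ellipticity (its lower eigenvalue bound degenerates to $0$ when $p>2$, while $A$ becomes singular when $1<p<2$); handling this set is the main obstacle, since there the linear argument above does not apply. Here the strict gap $\eta>0$ is decisive: it is exactly the uniform positivity of the source that forbids $v$ from touching zero even at a degenerate point, and I would make this rigorous through the nonlinear strong maximum principle of V\'azquez together with the boundary-point analysis for the $p$-Laplacian, precisely as in the more general result of Papageorgiou, R\u{a}dulescu \& Repov\v{s} \cite{18}. Combining the non-degenerate strong maximum principle and Hopf lemma of the preceding step with this treatment of the critical set gives $v>0$ throughout $\Omega$ and $\frac{\partial v}{\partial n}<0$ wherever $v$ vanishes on $\partial\Omega$, that is, $u_2-u_1\in\mathrm{int}\,C_+$, as claimed.
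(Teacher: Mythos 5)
The paper never proves this proposition: it is quoted as a special case of a result in Papageorgiou, R\u{a}dulescu \& Repov\v{s} \cite{18}, so the only thing to compare your attempt with is the argument of that reference. Your preliminary reduction to a constant zeroth-order coefficient, the linearization $-\mathrm{div}(A(z)Dv)+c(z)v=\hat h_2-\hat h_1\ge\eta>0$ with $A(z)=\int_0^1 Da((1-t)Du_1+tDu_2)\,dt$, and the use of the linear strong maximum principle and Hopf lemma wherever $A$ is locally uniformly elliptic are all correct and reproduce the standard skeleton (note that at any interior or boundary zero $z_0$ of $v$ one automatically has $Dv(z_0)=0$, so the real dichotomy is between $Du_1(z_0)=Du_2(z_0)\neq0$, which your linear argument handles, and $Du_1(z_0)=Du_2(z_0)=0$).

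The genuine gap is the degenerate case, which you rightly call ``the main obstacle'' but then dispose of by invoking V\'azquez's strong maximum principle ``precisely as in \cite{18}''. V\'azquez's theorem is not the right tool: it gives positivity of a \emph{single} nonnegative function satisfying an inequality of the form $\Delta_p u\le\beta(u)$, whereas here one needs a strict comparison of two ordered solutions, and $v=u_2-u_1$ satisfies no scalar differential inequality of that type because $\Delta_p$ is not additive. Indeed, the strong comparison principle for the $p$-Laplacian is false in general, and it is exactly the uniform gap $\eta>0$ --- invisible to V\'azquez's statement --- that must be exploited at points where both gradients vanish (for instance by an open--closed argument on the zero set of $v$, or by testing the difference of the equations on small balls centred at a degenerate zero, as in Arcoya--Ruiz and in \cite{18}). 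Since that step is the entire content of the proposition, deferring it wholesale to \cite{18} means your proposal establishes only the easy part and otherwise reduces to the same citation the paper already makes; to count as a proof it would have to carry out the degenerate-set argument explicitly.
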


Next, we consider the following nonlinear eigenvalue problem
\begin{equation}\label{eq1}
	\left\{
		\begin{array}{ll}
			-\Delta_p u(z) + \xi(z)|u(z)|^{p-2}u(z) =\hat\lambda|u(z)|^{p-2}u(z)\ \mbox{in}\ \Omega, \\
			\frac{\partial u}{\partial n_p} + \beta(z)|u|^{p-2}u=0\ \mbox{on}\ \partial\Omega.
		\end{array}
	\right\}
\end{equation}

We say that $\hat\lambda\in\RR$ is an ``eigenvalue" if problem (\ref{eq1}) admits a nontrivial solution $\hat{u}$, which is known as an ``eigenfunction" corresponding to $\hat\lambda$. We denote by $\hat\sigma(p)$ the set of eigenvalues of problem (\ref{eq1}). It is easy to see that $\hat\sigma(p)\subseteq\RR$ is closed and has a smallest element $\hat\lambda_1=\hat\lambda_1(p,\xi,\beta)\in\RR$ (the first eigenvalue), which has the following properties (for details, we refer to Papageorgiou \& R\u{a}dulescu \cite{15} and Fragnelli, Mugnai \& Papageorgiou \cite{5}).

\begin{prop}\label{prop5}
	If hypotheses $H(\xi), H(\beta)$ are satisfied, then problem (\ref{eq1}) has a smallest eigenvalue $\hat\lambda_1\in\RR$ such that
	\begin{itemize}
  		\item [(a)] $\hat\lambda_1$ is isolated in $\hat\sigma(p)$ (that is, there exists $\epsilon>0$ such that $(\hat\lambda_1,\hat\lambda,+\epsilon)\cap\hat\sigma(p)=\emptyset$);
  		\item [(b)] $\hat\lambda_1$ is simple (that is, if $\hat{u},\hat{v}$ are eigenfunctions corresponding to $\hat\lambda_1$, then $\hat{u}=\eta\hat{v}$ for some $\eta\in\RR\backslash\{0\}$);
  		\begin{equation}\label{eq2}
  		(c)\hspace{3.15cm}\hat\lambda_1=\inf\left\{\frac{\gamma_0(u)}{||u||^p_p}:u\in W^{1,p}(\Omega),u\neq0\right\}.\hspace{4cm}
  		\end{equation}
	\end{itemize}
\end{prop}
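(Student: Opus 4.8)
The plan is to obtain all the assertions by combining the direct method with nonlinear regularity theory and a Picone-type identity adapted to the Robin boundary condition; throughout I write $\gamma_p$ for the functional introduced above (the Rayleigh quotient in \eqref{eq2} being taken with respect to $\gamma_p$). First I would prove the variational characterization (c) together with existence. Since $\xi\in L^\infty(\Omega)$ and $\beta\geq0$, one has $\gamma_p(u)/\|u\|_p^p\geq-\|\xi\|_\infty$, so the infimum $\hat\lambda_1$ is finite. Taking a minimizing sequence with $\|u_n\|_p=1$, the estimate $\|Du_n\|_p^p\leq\gamma_p(u_n)+\|\xi\|_\infty$ shows $\{u_n\}$ is bounded in $W^{1,p}(\Omega)$; passing to $u_n\xrightarrow{w}u$ and using the compactness of $W^{1,p}(\Omega)\hookrightarrow L^p(\Omega)$ and of the trace map into $L^p(\partial\Omega)$, the potential and boundary integrals pass to the limit while $\|Du\|_p^p$ is weakly lower semicontinuous. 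Hence $u$ attains the infimum with $\|u\|_p=1$, and its Euler--Lagrange equation is exactly the weak form of \eqref{eq1} with $\hat\lambda=\hat\lambda_1$, so $\hat\lambda_1$ is an eigenvalue. Testing an arbitrary eigenpair $(\hat\lambda,\hat u)$ against $\hat u$ gives $\hat\lambda=\gamma_p(\hat u)/\|\hat u\|_p^p\geq\hat\lambda_1$, so $\hat\lambda_1$ is the smallest eigenvalue.

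Next I would fix the sign and regularity of the minimizer. As $\gamma_p(|u|)=\gamma_p(u)$, I may take $u\geq0$; nonlinear regularity yields $u\in C^{1,\vartheta}(\overline\Omega)$, and writing the equation as $-\Delta_p u+c\,u^{p-1}\geq0$ with $c=\|\xi\|_\infty+|\hat\lambda_1|$, the strong maximum principle together with the Robin condition and Hopf's lemma gives $u\in {\rm int}\,C_+$, indeed $u\in D_+$. The heart of the proof is then a Picone computation delivering (b) and the fact that a constant-sign eigenfunction can only belong to $\hat\lambda_1$. Given positive eigenfunctions $\hat u,\hat v\in D_+$ for $\hat\lambda_1$, I would insert the test function $\hat u^p/\hat v^{p-1}$ (admissible since $\hat v$ is bounded away from $0$ on $\overline\Omega$) into the weak form for $\hat v$; because $\hat v^{p-1}\cdot(\hat u^p/\hat v^{p-1})=\hat u^p$, the potential, boundary and eigenvalue terms combine through the relation $\gamma_p(\hat u)=\hat\lambda_1\|\hat u\|_p^p$ to leave $\langle A(\hat v),\hat u^p/\hat v^{p-1}\rangle=\|D\hat u\|_p^p$. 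Consequently the Picone functional $\int_\Omega\big(|D\hat u|^p-|D\hat v|^{p-2}(D\hat v,D(\hat u^p/\hat v^{p-1}))\big)\,dz$ vanishes, which by the Picone inequality forces $\hat u/\hat v$ to be constant; thus $\hat\lambda_1$ is simple, and the identical computation shows no eigenvalue other than $\hat\lambda_1$ admits a constant-sign eigenfunction.

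Finally I would establish isolation (a) by contradiction, assuming eigenvalues $\hat\lambda_n\neq\hat\lambda_1$ with $\hat\lambda_n\to\hat\lambda_1$ and normalized eigenfunctions $\|\hat u_n\|=1$. Boundedness yields a weak limit $\hat u$, and inserting $\hat u_n-\hat u$ into the weak forms together with the strong $L^p$-convergence shows $\limsup_n\langle A(\hat u_n),\hat u_n-\hat u\rangle\leq0$, so the $(S)_+$ property from Proposition~\ref{prop2} upgrades this to $\hat u_n\to\hat u$ in $W^{1,p}(\Omega)$; hence $\hat u$ is an eigenfunction for $\hat\lambda_1$, which by simplicity I may take in ${\rm int}\,C_+$. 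Nonlinear regularity furnishes a uniform $C^{1,\vartheta}(\overline\Omega)$ bound, so along a subsequence $\hat u_n\to\hat u$ in $C^1(\overline\Omega)$, and since $\hat u>0$ on $\overline\Omega$, the functions $\hat u_n$ are of constant sign for large $n$, contradicting the fact just established. I expect the Picone step to be the principal obstacle: it is delicate both in justifying the admissibility of $\hat u^p/\hat v^{p-1}$ (where membership in $D_+$ is essential) and in correctly tracking the cancellation of the indefinite potential and the Robin boundary contributions, while the isolation argument hinges on coupling the $(S)_+$ property with the $C^1$-compactness supplied by regularity.
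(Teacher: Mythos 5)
The paper offers no proof of Proposition~\ref{prop5}: it is quoted as a known result with the reader referred to Papageorgiou \& R\u{a}dulescu \cite{15} and Fragnelli, Mugnai \& Papageorgiou \cite{5}. Your argument is correct and is essentially the standard proof given in those references --- direct minimization of the Rayleigh quotient of $\gamma_p$ (note that $\gamma_0$ in \eqref{eq2} is a typo for $\gamma_p$) for existence and (c), nonlinear regularity plus the strong maximum principle to place the minimizer in $D_+$, the Picone identity for simplicity and for the fact that only $\hat\lambda_1$ has constant-sign eigenfunctions, and the $(S)_+$ property combined with $C^{1,\vartheta}$ compactness for isolation.
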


\begin{remark}
	The infimum in (\ref{eq2}) is realized on the corresponding one-dimensional eigenspace.
\end{remark}

It follows from (\ref{eq2}) that the elements of this eigenspace have fixed sign. We denote by $\hat{u}_1$  the positive, $L^p$-normalized (that is, $||\hat{u}_1||_p=1$) eigenfunction corresponding to $\hat\lambda_1$.
We know that $\hat{u}_1\in D_+$ (see \cite{15}, \cite{5}). Also, every eigenvalue different from $\hat\lambda_1$ has eigenfunctions in $C^1(\overline\Omega)$ which are nodal (that is, sign-changing). Finally, if $\xi\in L^\infty(\Omega), \xi(z)\geq0$ for almost all $z\in\Omega$ and either $\xi\not\equiv0$ or $\beta\not\equiv0$, then $\hat\lambda_1>0$.

An easy consequence of the above properties is the following lemma (see Mugnai \& Papageorgiou \cite[Lemma 4.11]{14}).

\begin{lemma}\label{lem6}
	If hypotheses $H(\xi), H(\beta)$ hold, $\eta\in L^\infty(\Omega), \eta(z)\leq\hat\lambda_1$ for almost all $z\in\Omega$, and the inequality is strict on a set of positive measure, then there exists $c_0>0$ such that
	$$
	c_0||u||^p\leq\gamma_p(u)-\int_\Omega\eta(z)|u|^pdz\ \mbox{for all}\ u\in W^{1,p}(\Omega).
	$$
\end{lemma}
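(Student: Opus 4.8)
The plan is to argue by contradiction, treating the quadratic-type form as a functional and showing it cannot degenerate. Set $\psi(u)=\gamma_p(u)-\int_\Omega\eta(z)|u|^pdz$ for $u\in W^{1,p}(\Omega)$. First I would record that $\psi\geq0$: since $\eta(z)\leq\hat\lambda_1$ almost everywhere, the variational characterization \eqref{eq2} gives $\psi(u)\geq\gamma_p(u)-\hat\lambda_1||u||^p_p\geq0$ for every $u$. Both $\psi$ and $||\cdot||^p$ are $p$-homogeneous, so if the asserted estimate fails for every $c_0>0$, then taking $c_0=\frac{1}{n}$ produces $u_n\neq0$ with $\psi(u_n)<\frac{1}{n}||u_n||^p$, and after normalizing $||u_n||=1$ we obtain a bounded sequence with $\psi(u_n)\to0^+$.

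Next I would pass to a limit. Since $W^{1,p}(\Omega)$ is reflexive, along a subsequence $u_n\xrightarrow{w}u$ in $W^{1,p}(\Omega)$; the compact Sobolev embedding gives $u_n\to u$ in $L^p(\Omega)$ and compactness of the trace map gives $u_n\to u$ in $L^p(\partial\Omega)$. Because $\xi,\eta\in L^\infty(\Omega)$ and $\beta$ is continuous (hence bounded) on $\partial\Omega$, all the lower-order integral terms in $\psi$ pass to the limit, while $u\mapsto||Du||^p_p$ is sequentially weakly lower semicontinuous. Hence $\psi$ is weakly lower semicontinuous and $0\leq\psi(u)\leq\liminf_n\psi(u_n)=0$, so $\psi(u)=0$.

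Now I would split into two cases according to whether $u=0$. If $u=0$, then $u_n\to0$ in $L^p(\Omega)$ and in $L^p(\partial\Omega)$, so the $\xi$-, $\beta$- and $\eta$-integrals all tend to $0$; combined with $\psi(u_n)\to0$ this forces $||Du_n||^p_p\to0$, and together with $||u_n||_p\to0$ we get $||u_n||\to0$, contradicting $||u_n||=1$. If $u\neq0$, then $\psi(u)=0$ with $\eta\leq\hat\lambda_1$ yields $\gamma_p(u)\leq\hat\lambda_1||u||^p_p$, while \eqref{eq2} gives the reverse inequality; hence $\gamma_p(u)=\hat\lambda_1||u||^p_p$, so $u$ realizes the infimum in \eqref{eq2} and is therefore an eigenfunction corresponding to $\hat\lambda_1$. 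By the simplicity and fixed-sign properties recorded in Proposition \ref{prop5} and the subsequent discussion, $u=t\hat{u}_1$ for some $t\neq0$ with $\hat{u}_1\in D_+$, so that $|u(z)|>0$ for all $z\in\overline\Omega$.

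The decisive step is then to rewrite, using $\gamma_p(u)=\hat\lambda_1||u||^p_p$, the quantity $\psi(u)=\gamma_p(u)-\int_\Omega\eta|u|^pdz=\int_\Omega(\hat\lambda_1-\eta(z))|u|^pdz$. The integrand is nonnegative almost everywhere since $\eta\leq\hat\lambda_1$, and on the set of positive measure where $\hat\lambda_1-\eta>0$ we also have $|u|^p>0$; hence the integrand is strictly positive there, forcing $\psi(u)>0$ and contradicting $\psi(u)=0$. I expect this last case to be the main obstacle: one must promote the statement ``$u$ is a nonzero minimizer of the Rayleigh quotient'' to ``$u$ has no zeros in $\overline\Omega$,'' which is precisely where the one-dimensionality of the $\hat\lambda_1$-eigenspace and the positivity fact $\hat{u}_1\in D_+$ enter. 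Without $|u|>0$ the strict-inequality hypothesis on $\eta$ would not suffice to conclude $\psi(u)>0$.
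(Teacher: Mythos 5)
Your proof is correct, and it is essentially the standard argument: the paper itself gives no proof of Lemma~\ref{lem6} but cites Mugnai \& Papageorgiou \cite[Lemma 4.11]{14}, whose proof proceeds exactly as you do (contradiction, normalized minimizing sequence, weak lower semicontinuity, and the two cases $u=0$ and $u\neq0$, the latter resolved via simplicity of $\hat\lambda_1$ and $\hat{u}_1\in D_+$). Your identification of the decisive point --- upgrading a nonzero minimizer of the Rayleigh quotient in \eqref{eq2} to a nowhere-vanishing multiple of $\hat{u}_1$ so that the strict inequality $\eta<\hat\lambda_1$ on a set of positive measure forces $\int_\Omega(\hat\lambda_1-\eta)|u|^p\,dz>0$ --- is exactly where the hypothesis enters.
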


The hypotheses on the two terms of the reaction of \eqref{eqp} are the following.

\smallskip
$H(f)$ $f:\Omega\times\RR\times(0,+\infty)\rightarrow\RR$ is a Carath\'eodory function such that for all $\lambda>0,\ f(z,x,\lambda)\geq0$ for almost all $z\in\Omega$ and all $x\geq0$, $f(z,0,\lambda)=0$ for almost all $z\in\Omega$, and
	\begin{itemize}
		\item [(i)] for every $\rho>0$, $\lambda_0>0$, there exists $a_{\rho,\lambda_0}\in L^\infty(\Omega)$ such that
			$$
			0\leq f(z,x,\lambda) \leq a_{\rho,\lambda_0}(z)\ \mbox{for almost all}\ z\in\Omega\ \mbox{and all}\ 0\leq x\leq \rho, \ 0<\lambda\leq\lambda_0;
			$$
		\item [(ii)] for every $\lambda>0$, we have
			$$
			\lim_{x\rightarrow+\infty}\frac{f(z,x,\lambda)}{x^{p-1}}=\lim_{x\rightarrow0^+}\frac{f(z,x,\lambda)}{x^{p-1}}=0\ \mbox{uniformly for almost all}\ z\in\Omega;
			$$
		\item [(iii)] if $F(z,x,\lambda)=\int_0^x f(z,s,\lambda)ds$, then there exist $v_0\in L^p(\Omega)$ and $\tilde\lambda>0$ such that $\int_\Omega F(z,v_0(z),\lambda)dz>0$ for all $\lambda>\tilde{\lambda}$;
		\item [(iv)]
				$\bullet$ we have $f(z,x,\lambda)\rightarrow0^+$ as $\lambda\rightarrow0^+$ uniformly for almost all $z\in\Omega$ and all $x\in C\subseteq\RR_+$ bounded, $f(z,x,\lambda)\rightarrow+\infty$ as $\lambda\rightarrow+\infty$ for almost all $z\in\Omega$ and all $x>0$;\\
				$\bullet$ for every $s>0$, we can find $\tilde\eta_s>0$ such that
					$$
					0<\tilde\eta_s\leq f(z,x,\mu) - f(z,x,\lambda)\ \mbox{for almost all}\ z\in\Omega\  \mbox{and all}\ x\geq s,\ 0<\lambda<\mu.
					$$
	\end{itemize}

\begin{remark}
	Since we are looking for positive solutions and all the above hypotheses concern the positive semiaxis $\RR_+=[0,+\infty)$, we may assume without any loss of generality that
	\begin{equation}\label{eq3}
		f(z,\cdot,\lambda)|_{(-\infty,0]}=0\ \mbox{for almost all}\ z\in\Omega\ \mbox{and all}\ \lambda>0.
	\end{equation}
\end{remark}

Note that hypothesis $H(f)(ii)$ implies that $f(z,\cdot,\lambda)$ is strictly $(p-1)$-sublinear near $+\infty$ and also near $0^+$. Hypothesis $H(f)(iii)$ is satisfied if there exists $\tilde\lambda>0$ such that $L(z)=\{x\in\RR: f(z,x,\lambda)>0\}$ is nonempty for almost all $z\in\Omega$ and all $\lambda>\tilde\lambda$. Finally, note that  hypothesis $H(f)(iv)$ implies that for almost all $z\in\Omega$ and all $x>0$, the mapping $\lambda\mapsto f(z,x,\lambda)$ is strictly increasing.

\smallskip
$H(g)$: $g:\Omega\times\RR\rightarrow\RR$ is a Carath\'eodory function such that $g(z,0)=0$ for almost all $z\in\Omega$ and
\begin{itemize}
	\item [(i)] there exist $a\in L^\infty(\Omega)$ and $p\leq r<p^*$ such that
		$$
		(g(z,x))\leq a(1)(1+x^{r-1})\ \mbox{for almost all}\ z\in\Omega\ \mbox{and all}\ x\geq0;
		$$
	\item [(ii)] there exists a function $\eta_0\in L^\infty(\Omega)$ such that
		$$
		\begin{array}{ll}
			\eta_0(z)\leq\hat\lambda_1\ \mbox{for almost all}\ z\in\Omega,\ \eta_0\not\equiv\hat\lambda_1,\\
			\limsup_{x\rightarrow+\infty}\frac{g(z,x)}{x^{p-1}}\leq\eta_0(z)\ \mbox{and}\ \limsup_{x\rightarrow0^+}\frac{g(z,x)}{x^{p-1}}\leq\eta_0(z)\ \mbox{uniformly for almost all}\ z\in\Omega;
		\end{array}
		$$
	\item [(iii)] for almost all $z\in\Omega$, the mapping $x\mapsto\frac{g(z,x)}{x^{p-1}}$ is nondecreasing on $(0,+\infty)$.
\end{itemize}

\begin{remark}
	As we did for $f(z,\cdot,\lambda)$, without any loss of generality, we may assume that
	\begin{equation}\label{eq4}
		g(z,\cdot)|_{(-\infty,0]}=0\ \mbox{for almost all}\ z\in\Omega.
	\end{equation}
\end{remark}

Hypothesis $H(g)(ii)$ says that asymptotically at $+\infty$ and at $0^+$ we have nonuniform nonresonance with respect to $\hat\lambda_1$ from the left.

\smallskip
$H_0:$ for every $\rho>0$, $\tilde\lambda>0$, we can find $\hat\xi_0^{\tilde\lambda}>0$ such that for almost all $z\in\Omega$ and all $0<\lambda\leq\lambda_0$, the function $x\mapsto f(z,x,\lambda)+g(z,x)+\hat\xi_\rho^{\hat\lambda}x^{p-1}$ is nondecreasing on $[0,\rho]$.

\begin{remark}
	This hypothesis is satisfied if, for example, for almost all $z\in\Omega$ and every $\lambda>0$, the functions $f(z,\cdot,\lambda)$ and $g(z,\cdot)$ are differentiable and for every $\rho>0$, $\hat\lambda>0$, there exists $\hat\xi_\rho^{\tilde\lambda}>0$ such that
	$$
	(f'(z,x,\lambda)+g'_x(z,x))x\geq - \hat{\xi}_\rho^{\tilde\lambda}x^{p-1}\ \mbox{for almost all}\ z\in\Omega\ \mbox{and all}\ 0\leq x\leq\rho.
	$$
\end{remark}

\textit{Examples:} The following pairs of functions $f$ and $g$ satisfy hypotheses $H(f), H(g), H_0$. For the sake of simplicity we drop the $z$-dependence. Also recall (\ref{eq3}) and (\ref{eq4}).

\medskip\noindent
$
	f_1(x,\lambda) = \left\{\begin{array}{lll}
&\lambda x^{p-1}\ln (1+x) &\quad \mbox{if}\ 0\leq x\leq 1\\
&\lambda x^{q-1}&\quad\mbox{if}\ 1<x\end{array}\right.\qquad 1<q<p
$
	$$
\begin{array}{ll}
	g_1(x)=\eta x^{p-1}\ & \mbox{for}\ x\geq0,\ \eta<\hat{\lambda}_1, \\
	\\
	f_2(x,\lambda)=\left\{\begin{array}{ll}
		\lambda x^{r-1}\ &\mbox{if}\ 0\leq x\leq1\\
		\lambda x^{q-1}\ &\mbox{if}\ 1<x
	\end{array}\right.\ & 1<q<p<r, \\
	\\
	g_2(x)=\left\{\begin{array}{ll}
		cx^{\tau-1} - x^{q-1}\ &\mbox{if}\ 0\leq x\leq1\\
		\eta x^{p-1} + (c-1-\eta)\ &\mbox{if}\ 1<x
	\end{array}\right. & 1<q<p\leq\tau, \eta<\hat\lambda_1,\ c>\max\{\eta+1,0\},\\
	\\
	f_3(x,\lambda)=\left\{\begin{array}{ll}
		\lambda(x^{\tau-1}-x^{r-1})\ &\mbox{if}\ 0\leq x\leq1\\
		\lambda x^{q-1}\ln x\ &\mbox{if}\ 1<x
	\end{array}\right. & 1<q<p<\tau<r,\\
	\\
	g_3(x)=\left\{\begin{array}{ll}
		\eta(x^{p-1}+x^{r-1})\ &\mbox{if}\ 0\leq x\leq1\\
		\eta(x^{p-1}+x^{q-1})\ &\mbox{if}\ 1<x
	\end{array}\right. & 1<q<p<r,\ \eta<\hat\lambda_1, \\
	\\
	f_4(x,\lambda)=\left\{\begin{array}{ll}
		x^{\tau-1} & \mbox{if}\ 0\leq x\leq \rho(\lambda)\\
		x^{q-1} +\mu(\lambda)\ & \mbox{if}\ \rho(\lambda)<x
	\end{array}\right. \\
	\\
	g_4(x)=\eta x^{p-1}
\end{array}
$$
with $\rho:(0,+\infty)\rightarrow(0,+\infty)$ strictly increasing, continuous, $\rho(\lambda)\rightarrow0^+$ as $\lambda\rightarrow0^+$, $\rho(\lambda)\rightarrow+\infty$ as $\lambda\rightarrow+\infty$, $ \mu(\lambda)=[\rho(\lambda)^{\tau-1}-1]\rho(\lambda)^{q-1}$, $1<q<p<\tau$ and $\eta<\hat{\lambda}_1$.

Finally, we fix some basic notations which we will use throughout this paper. Let $x\in\RR$ and set $x^\pm=\max\{\pm x,0\}$. Then for $u\in W^{1,p}(\Omega)$ we define $u^\pm(\cdot)=u(\cdot)^\pm$. We know that
$$
u^\pm\in W^{1,p}(\Omega),\ u=u^+-u^-,\ |u|=u^++u^-.
$$

Also, if $u,\hat{u}\in W^{1,p}(\Omega)$ and $u\leq\hat{u}$, then
$$
[u,\hat{u}] = \{v\in W^{1,p}(\Omega): u(z)\leq v(z)\leq\hat{u}(z)\ \mbox{for almost all}\ z\in\Omega\}.
$$

We denote by ${\rm int}_{C^1(\overline\Omega)}[u,\hat{u}]$ the interior  of $[u,\hat{u}]\cap C^1(\overline\Omega)$ in $C^1(\overline\Omega)$.
Under the hypotheses on the data of problem $(P_\lambda)$, the main result of this paper is the following bifurcation-type theorem.

\begin{theorem}\label{th14}
 Assume that hypotheses $H(\xi),H(\beta),H(f),H(g),H_0$ hold. Then there exists $\lambda^*>0$ such that
	\begin{itemize}
		\item[(a)] for all $\lambda>\lambda^*$, problem \eqref{eqp} has at least two positive solutions
		$$u_0,\ \hat{u}\in D_+;$$
		\item[(b)] for $\lambda=\lambda^*$, problem \eqref{eqp} has at least one positive solution
		$$u_{\lambda^*}\in D_+;$$
		\item[(c)] for all $\lambda\in(0,\lambda^*)$, problem \eqref{eqp} has no positive solutions.
	\end{itemize}  
	\end{theorem}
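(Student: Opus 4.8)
The plan is to analyze the set $\mathcal{L}=\{\lambda>0:\eqref{eqp}\ \text{has a positive solution}\}$, to show that it is a half-line $[\lambda^*,+\infty)$ with $\lambda^*>0$, and then to obtain the second solution in part (a) from a two-local-minima geometry of the associated energy functional. Throughout I would work with the $C^1$-functional
$$
\varphi_\lambda(u)=\frac1p\gamma_p(u)-\int_\Omega F(z,u,\lambda)\,dz-\int_\Omega G(z,u)\,dz,\qquad G(z,x)=\int_0^x g(z,s)\,ds,
$$
whose critical points are exactly the (nonnegative, by \eqref{eq3}--\eqref{eq4}) weak solutions of \eqref{eqp}. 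The decisive structural fact is that, since $f(z,\cdot,\lambda)$ is strictly $(p-1)$-sublinear ($H(f)(ii)$) and $g$ is nonuniformly nonresonant from the left of $\hat\lambda_1$ ($H(g)(ii)$), the reaction is asymptotically dominated by $\eta_0\le\hat\lambda_1$; combining this with the gap estimate of Lemma~\ref{lem6} shows that $\varphi_\lambda$ is coercive, and by Proposition~\ref{prop2} and the Remark after Theorem~\ref{th1} it satisfies the PS-condition. Hence $\varphi_\lambda$ admits a global minimizer for every $\lambda>0$.

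Next I would settle the endpoints of $\mathcal{L}$. For large $\lambda$, fixing any $u\in D_+$, the growth condition $H(f)(iv)$ (namely $f\to+\infty$ as $\lambda\to+\infty$) forces $\int_\Omega F(z,u,\lambda)\,dz\to+\infty$, so $\varphi_\lambda(u)\to-\infty$ and the global minimizer $u_0$ is nontrivial; nonlinear regularity together with the strong comparison principle of Proposition~\ref{prop4} then yields $u_0\in D_+$, so $\mathcal{L}\neq\emptyset$. For small $\lambda$, testing a hypothetical positive solution against itself and using the smallness of $f$ as $\lambda\to0^+$ ($H(f)(iv)$) together with Lemma~\ref{lem6} produces $c_0\|u\|^p\le o(\|u\|^p)$, which is impossible; therefore $\lambda^*:=\inf\mathcal{L}>0$, which will give the nonexistence statement (c). Finally, the monotonicity of $\lambda\mapsto f(z,x,\lambda)$ in $H(f)(iv)$ shows that a solution at $\lambda$ is a strict subsolution at any $\mu>\lambda$; truncating the reaction at this subsolution and minimizing (using $H_0$ for the comparison) produces a solution at $\mu$, whence $(\lambda^*,+\infty)\subseteq\mathcal{L}$.

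To prove existence at the critical value (part (b)), I would take $\lambda_n\downarrow\lambda^*$ with positive solutions $u_n\in D_+$, deduce from coercivity that $\{u_n\}$ is bounded in $W^{1,p}(\Omega)$, and extract a strongly convergent subsequence $u_n\to u_{\lambda^*}$ using the $(S)_+$ property of $A$ (Proposition~\ref{prop2}); passing to the limit shows that $u_{\lambda^*}$ solves \eqref{eqp} at $\lambda^*$. The genuinely delicate point, which I expect to be the main obstacle, is to exclude $u_{\lambda^*}=0$: if $u_n\to0$, I would normalize $y_n=u_n/\|u_n\|$ and pass to the limit in the rescaled equation, where the sublinearity of $f$ near $0^+$ ($H(f)(ii)$) kills the parametric term and $H(g)(ii)$ controls the perturbation, producing a limit $y\ge0$ with $\|y\|=1$ and $\gamma_p(y)\le\int_\Omega\eta_0|y|^p\,dz$; this contradicts Lemma~\ref{lem6}. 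Hence $u_{\lambda^*}\in D_+$ and $\lambda^*\in\mathcal{L}$.

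It remains to produce the second solution for $\lambda>\lambda^*$. The key observation is that the same mechanism responsible for nonexistence makes the origin a strict local minimizer: near $0^+$ both the sublinear term $f$ and the nonresonant term $g$ stay below $\hat\lambda_1$, so Lemma~\ref{lem6} gives $\varphi_\lambda(u)\ge\frac{c_0-\varepsilon}{p}\|u\|^p$ for all $u$ small in $C^1(\overline\Omega)$, and Proposition~\ref{prop3} upgrades this to a local $W^{1,p}(\Omega)$-minimum at $0$. For $\lambda>\lambda^*$, the monotonicity of $\varphi_\lambda$ in $\lambda$ yields $\inf_{W^{1,p}(\Omega)}\varphi_\lambda<0$, so the nontrivial global minimizer $u_0\in D_+$ satisfies $\varphi_\lambda(u_0)<0=\varphi_\lambda(0)$ and $\varphi_\lambda$ possesses two distinct local minima. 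Applying the mountain pass theorem (Theorem~\ref{th1}) with endpoints $0$ and $u_0$ produces a third critical point $\hat u$ with $\varphi_\lambda(\hat u)\ge m_\rho>0$, so $\hat u\notin\{0,u_0\}$; nonlinear regularity and Proposition~\ref{prop4} give $\hat u\in D_+$, completing part (a). I expect the principal technical difficulties to lie in the limiting/normalization argument at $\lambda^*$ and in verifying that the mountain pass level is strictly positive, so that $\hat u$ is genuinely distinct from the two minimizers.
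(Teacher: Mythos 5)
Your outline reproduces the paper's overall strategy (nonemptiness of $\mathcal{L}$, positivity of $\lambda^*=\inf\mathcal{L}$, the half-line property, closedness at $\lambda^*$, and a local-minimum-plus-mountain-pass argument for the second solution), but two steps do not survive scrutiny. First, a technical one: with $\xi$ indefinite, the unperturbed functional $\varphi_\lambda(u)=\frac1p\gamma_p(u)-\int_\Omega F(z,u,\lambda)dz-\int_\Omega G(z,u)dz$ is \emph{not} coercive and its critical points need not be nonnegative. On the negative cone $F=G=0$, so $\varphi_\lambda(-t)=\frac{t^p}{p}\left(\int_\Omega\xi\,dz+\int_{\partial\Omega}\beta\,d\sigma\right)\to-\infty$ whenever that bracket is negative, and testing a critical point with $-u^-$ only yields $\gamma_p(u^-)=0$, which does not force $u^-=0$ since Lemma \ref{lem6} with $\eta\equiv0$ is unavailable ($\hat\lambda_1$ may be $\leq 0$). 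This is why the paper works with $\Psi_\lambda(u)=\frac1p\gamma_p(u)+\frac{\mu}{p}\|u^-\|_p^p-\int_\Omega K_\lambda(z,u)dz$ with $\mu>\|\xi\|_\infty$; the fix is routine but necessary.

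The serious gap is in part (a). You claim that for every $\lambda>\lambda^*$ the global minimizer of the energy is nontrivial because ``the monotonicity of $\varphi_\lambda$ in $\lambda$ yields $\inf\varphi_\lambda<0$.'' Monotonicity only propagates negativity of the infimum \emph{downward from large $\lambda$}: the argument via $H(f)(iii)$--$(iv)$ shows $\inf\Psi_\lambda<0$ for all $\lambda$ beyond some threshold, but there is no reason that threshold coincides with $\lambda^*$, which is defined by solvability, not by the sign of the infimum. On the possibly nonempty range of $\lambda\in(\lambda^*,\text{threshold})$ the global minimizer could be $0$ and your construction produces no first solution at all, let alone a mountain-pass geometry. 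The paper avoids this by an entirely different mechanism: for $\lambda^*<\eta<\lambda<\vartheta$ it takes solutions $u_\eta\in S(\eta)$, $u_\vartheta\in S(\vartheta)$ with $u_\vartheta-u_\eta\in{\rm int}\,C_+$ (Propositions \ref{prop9} and \ref{prop11}, which rest on $H(f)(iv)$ and the strong comparison principle of Proposition \ref{prop4}), truncates the reaction to the order interval $[u_\eta,u_\vartheta]$, and minimizes the truncated functional; the minimizer $u_0$ lies in ${\rm int}_{C^1(\overline{\Omega})}[u_\eta,u_\vartheta]$, hence is a genuine nontrivial local $C^1$--, and by Proposition \ref{prop3} local $W^{1,p}$--, minimizer of $\psi_\lambda$, with no information needed about the global infimum. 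Note also that the paper does not assert $\psi_\lambda(u_0)<0$; it treats the two cases $\psi_\lambda(0)\leq\psi_\lambda(u_0)$ and the reverse symmetrically, and obtains $\hat u\notin\{0,u_0\}$ from $\psi_\lambda(\hat u)\geq m_\lambda>\max\{\psi_\lambda(0),\psi_\lambda(u_0)\}$ after invoking the finiteness of $K_{\psi_\lambda}$ to pass from the $C^1$-ball estimate to a $W^{1,p}$-sphere bound. Your normalization argument for excluding $u_{\lambda^*}=0$ in part (b) is a legitimate alternative to the paper's direct estimate via the $C^{1,\gamma}$ bounds of Lieberman, but it too ultimately needs those bounds (or a.e.\ convergence with domination) to control the quotients $f(z,u_n,\lambda_n)/u_n^{p-1}$ and $g(z,u_n)/u_n^{p-1}$.
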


Finally, if $\varphi\in C^1(X,\RR)$, then by $K_\varphi$ we denote the critical set of $\varphi$, that is,
$$
K_\varphi=\{u\in X:\varphi'(u)=0\}.
$$

\section{Positive solutions}
Throughout the rest of the paper we assume that hypotheses $H(\xi),\, H(\beta),\,
H(f),\, H(g),\, H_0$ are fulfilled.
We introduce the following  sets:
$$
\begin{array}{ll}
	\mathcal{L}=\{\lambda>0:\ \mbox{problem \eqref{eqp} admits a positive solution}\},\\
	S(\lambda)=\ \mbox{the set of positive solutions for problem \eqref{eqp}}.
\end{array}
$$

We set $\lambda^*=\inf\mathcal{L}$ with the usual convention that $\inf\emptyset=+\infty$.

\begin{prop}\label{prop8}
	We have $\mathcal{L}\neq\emptyset$ and so $0\leq\lambda^*<+\infty$.
\end{prop}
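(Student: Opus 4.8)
The plan is to construct, for all large $\lambda$, a nontrivial nonnegative weak solution by the direct method on the energy functional, and then to promote it to a positive solution lying in ${\rm int}\,C_+$ through nonlinear regularity and a strong maximum principle. This yields $\mathcal{L}\neq\emptyset$, and since $\mathcal{L}\subseteq(0,+\infty)$ we conclude $0\leq\lambda^*=\inf\mathcal{L}<+\infty$.

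First I would introduce the $C^1$ energy functional $\varphi_\lambda:W^{1,p}(\Omega)\to\RR$,
$$\varphi_\lambda(u)=\frac1p\gamma_p(u)-\int_\Omega F(z,u,\lambda)\,dz-\int_\Omega G(z,u)\,dz,\qquad G(z,x)=\int_0^x g(z,s)\,ds,$$
noting that by the conventions \eqref{eq3}--\eqref{eq4} the two integral terms only feel $u^+$. The first key step is coercivity. The strict $(p-1)$-sublinearity at $+\infty$ and $0^+$ in $H(f)(ii)$ gives, for each $\varepsilon>0$, a constant $c_\varepsilon=c_\varepsilon(\lambda)$ with $F(z,x,\lambda)\leq\frac{\varepsilon}{p}|x|^p+c_\varepsilon$, while $H(g)(ii)$ gives $G(z,x)\leq\frac1p\eta_0(z)|x|^p+\frac{\varepsilon}{p}|x|^p+c_\varepsilon'$ for $x\geq0$. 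Substituting these and applying Lemma~\ref{lem6} with $\eta=\eta_0$ (admissible since $\eta_0\leq\hat\lambda_1$, $\eta_0\not\equiv\hat\lambda_1$) yields
$$\varphi_\lambda(u)\geq\frac1p\Big[\gamma_p(u)-\int_\Omega\eta_0(z)|u|^p\,dz\Big]-\frac{2\varepsilon}{p}||u||^p-C\geq\Big(\frac{c_0}{p}-\frac{2\varepsilon}{p}\Big)||u||^p-C,$$
which is coercive once $\varepsilon<c_0/2$. Sequential weak lower semicontinuity is routine: $u\mapsto\frac1p||Du||_p^p$ is convex and continuous, and the potential, boundary and reaction terms are weakly continuous because $W^{1,p}(\Omega)\hookrightarrow L^p(\Omega)$ and the trace map into $L^p(\partial\Omega)$ are compact. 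By the Weierstrass theorem $\varphi_\lambda$ attains its infimum at some $u_\lambda$, a critical point, so $\varphi_\lambda'(u_\lambda)=0$.

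Next I would establish nontriviality for large $\lambda$ and fix the sign. Testing with $t\hat u_1$ for a fixed $t>0$, the quantity $\frac1p\gamma_p(t\hat u_1)-\int_\Omega G(z,t\hat u_1)\,dz$ is independent of $\lambda$, whereas $H(f)(iv)$ (with the monotonicity of $\lambda\mapsto f(z,x,\lambda)$) gives $F(z,t\hat u_1(z),\lambda)\to+\infty$ as $\lambda\to+\infty$ for a.a.\ $z$, since $\hat u_1\in D_+$ makes $\hat u_1(z)>0$ throughout $\overline\Omega$. As $F\geq0$, Fatou's lemma forces $\int_\Omega F(z,t\hat u_1,\lambda)\,dz\to+\infty$, hence $\varphi_\lambda(t\hat u_1)\to-\infty$; so there is $\lambda_0>0$ with $\varphi_\lambda(u_\lambda)\leq\varphi_\lambda(t\hat u_1)<0=\varphi_\lambda(0)$ for all $\lambda>\lambda_0$, whence $u_\lambda\neq0$. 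For the sign I would test $\varphi_\lambda'(u_\lambda)=0$ with $h=-u_\lambda^-$: the reaction integrals drop out because $f(z,0,\lambda)=g(z,0)=0$, leaving $\gamma_p(u_\lambda^-)=0$. Using the additivity $\gamma_p(u_\lambda)=\gamma_p(u_\lambda^+)+\gamma_p(u_\lambda^-)$ (valid since $u_\lambda^+$ and $u_\lambda^-$ have disjoint supports) I obtain $\varphi_\lambda(u_\lambda)=\varphi_\lambda(u_\lambda^+)$, so $u_\lambda^+$ is itself a global minimizer, and it is nonzero because its energy is negative. Replacing $u_\lambda$ by $u_\lambda^+$ gives a nonnegative nontrivial weak solution of \eqref{eqp}.

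Finally I would upgrade this solution. The subcritical growth in $H(f)(i)$ and $H(g)(i)$ ($p\leq r<p^*$) permits a Moser-type iteration giving $u_\lambda\in L^\infty(\Omega)$; the right-hand side then lies in $L^\infty(\Omega)$ and nonlinear regularity for Robin problems yields $u_\lambda\in C^{1,\vartheta}(\overline\Omega)\cap C_+$ for some $\vartheta\in(0,1)$. Since $f,g\geq0$ on $\RR_+$,
$$-\Delta_p u_\lambda+(\xi(z)+||\xi||_\infty)u_\lambda^{p-1}\geq0\quad\text{in }\Omega,\qquad \xi(z)+||\xi||_\infty\geq0,$$
so the strong maximum principle and the boundary-point lemma place $u_\lambda\in{\rm int}\,C_+$ (indeed in $D_+$). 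Hence $\lambda\in\mathcal{L}$ for every $\lambda>\lambda_0$, so $\mathcal{L}\neq\emptyset$ and $0\leq\lambda^*<+\infty$. The main obstacle I anticipate is the indefiniteness of $\xi$: $\gamma_p$ need not be positive definite and $\hat\lambda_1$ may be nonpositive, so neither coercivity nor the sign of the minimizer can be read off from $\gamma_p$ directly. The resolution is to route coercivity through Lemma~\ref{lem6} (which holds for any sign of $\hat\lambda_1$) and to secure nonnegativity by passing to $u_\lambda^+$ via the additivity of $\gamma_p$, rather than arguing $u_\lambda^-=0$.
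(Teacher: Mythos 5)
Your overall strategy (direct method, nontriviality for large $\lambda$, sign adjustment, then regularity and the maximum principle) follows the paper's, but there is a genuine gap at the very first step: the unperturbed functional $\varphi_\lambda$ need not be coercive, nor even bounded below. By the conventions (\ref{eq3})--(\ref{eq4}) the reaction integrals vanish on nonpositive functions, so for $u\leq0$ you have $\varphi_\lambda(u)=\frac{1}{p}\gamma_p(u)$. Since $\xi$ is indefinite, $\hat\lambda_1$ may be negative (take $\xi\equiv-1$, $\beta\equiv0$), and then $\varphi_\lambda(-t\hat u_1)=\frac{t^p}{p}\hat\lambda_1\rightarrow-\infty$ as $t\rightarrow+\infty$. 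The faulty step is your displayed coercivity inequality, where $\int_\Omega\eta_0(z)(u^+)^p\,dz$ is tacitly replaced by $\int_\Omega\eta_0(z)|u|^p\,dz$; since $|u|^p=(u^+)^p+(u^-)^p$, this substitution costs $\int_\Omega\eta_0(z)(u^-)^p\,dz$, which has no sign because $\eta_0$ is only assumed to satisfy $\eta_0\leq\hat\lambda_1$ and may well be negative. Lemma~\ref{lem6} controls $\gamma_p(u)-\int_\Omega\eta_0(z)|u|^p\,dz$, but your functional only subtracts the $(u^+)^p$-weighted integral, so the lemma cannot be invoked for the full $u$. Consequently a global minimizer need not exist, and your (otherwise correct) replacement of $u_\lambda$ by $u_\lambda^+$ via the additivity of $\gamma_p$ has nothing to act on.

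The paper's remedy, which you would need to adopt, is to minimize the penalized functional $\Psi_\lambda(u)=\frac{1}{p}\gamma_p(u)+\frac{\mu}{p}\|u^-\|^p_p-\int_\Omega K_\lambda(z,u)\,dz$ with $\mu>\|\xi\|_\infty$: the extra term restores coercivity in the $u^-$ direction (there one has $\Psi_\lambda(u)\geq\frac{1}{p}\|Du^-\|^p_p+\frac{\mu-\|\xi\|_\infty}{p}\|u^-\|^p_p$ plus the well-controlled $u^+$ contribution), and testing the Euler--Lagrange equation with $h=-u_\lambda^-$ then yields $\gamma_p(u_\lambda^-)+\mu\|u_\lambda^-\|^p_p=0$, hence $c\|u_\lambda^-\|^p\leq0$ and $u_\lambda\geq0$ directly, with no need for the $u^+$-trick. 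For the rest, your nontriviality argument via $t\hat u_1$ and Fatou's lemma is sound and in fact bypasses hypothesis $H(f)(iii)$ (the paper instead uses $H(f)(iii)$ plus a density argument to produce a nonnegative $\tilde v_0\in W^{1,p}(\Omega)$ with $\int_\Omega F(z,\tilde v_0,\lambda)\,dz\rightarrow+\infty$), and the regularity and maximum-principle upgrade matches the paper; only the coercivity step requires the penalization.
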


\begin{proof}
	From hypotheses $H(f)(i),(ii)$, we see that given $\epsilon>0$ and $\lambda>0$, we can find $c_1=c_1(\epsilon,\lambda)>0$ such that
	\begin{equation}\label{eq12}
		F(z,x,\lambda) \leq \frac{\epsilon}{p}x^p + c_1\ \mbox{for almost all}\ z\in\Omega\ \mbox{and all}\ x\geq0.
	\end{equation}
	
	Similarly, hypotheses $H(g)(i),(ii)$ imply that we can find $c_2=c_2(\epsilon)>0$ such that
	\begin{equation}\label{eq13}
		G(z,x) \leq (\eta_0(z) + \epsilon) x^p + c_2\ \mbox{for almost all}\ z\in\Omega\ \mbox{and all}\ x\geq0.
	\end{equation}
	
	Let $\mu>||\xi||_\infty$ (see hypothesis $H(\xi)$) and consider the Carath\'eodory function $k_\lambda(z,x)$ defined by
	$$
	k_\lambda(z,x) = f(z,x,\lambda) + g(z,x)\ \mbox{for all}\ (z,x)\in\Omega\times\RR, \ \lambda>0\ \mbox{(see (\ref{eq3}), (\ref{eq4}))}.
	$$
	
	We set $K_\lambda(z,x)=\int^x_0k_\lambda(z,s)ds$ and consider the $C^1$-functional $\Psi_\lambda: W^{1,p}(\Omega)\rightarrow\RR$ defined by
	$$
	\Psi_\lambda(u)=\frac{1}{p}\gamma_p(u) + \frac{\mu}{p}||u^-||^p_p - \int_\Omega K_\lambda(z,u)dz\ \mbox{for all}\ u\in W^{1,p}(\Omega).
	$$
	
	Using (\ref{eq12}) and (\ref{eq13}), we have for all $u\in W^{1,p}(\Omega)$.
$$	\begin{array}{ll}
	\displaystyle	\Psi_\lambda(u) &\displaystyle \geq c_3||u^-||^p + \frac{1}{p}\gamma_p(u^+) - \frac{1}{p}\int_\Omega(\eta_0(z)+2\epsilon)\,(u^+)^pdz - c_4\\
		& \displaystyle\mbox{for some}\ c_3,c_4>0\ \mbox{(recall that $\mu>||\xi||_\infty$)} \\
		& \displaystyle\geq c_3||u^-||^p + (c_0-2\epsilon)\,||u^+||^p - c_4.
	\end{array}$$
	
	Choosing $\epsilon\in(0,\frac{c_0}{2})$, we obtain
	\begin{eqnarray*}
		&& \Psi_\lambda(u) \geq c_5||u||^p - c_4\ \mbox{for some}\ c_5>0\ \mbox{and all}\ u\in W^{1,p}(\Omega),\\
		\Rightarrow && \Psi_\lambda(\cdot)\ \mbox{is coercive}.
	\end{eqnarray*}
	
	Also, using the Sobolev embedding theorem and the compactness of the trace map, we see that
	$$
	\Psi_\lambda(\cdot)\ \mbox{is sequentially weakly lower semicontinuous}.
	$$
	
	By the Weierstrass-Tonelli theorem, we can find $u_\lambda\in W^{1,p}(\Omega)$ such that
	\begin{equation}\label{eq14}
		\Psi_\lambda(u_\lambda) = \inf\left\{\Psi_\lambda(u): u\in W^{1,p}(\Omega)\right\}.
	\end{equation}
	
	Hypotheses $H(f)(i),(ii)$ imply that for every $\lambda>0$, we can find $c_6=c_6(\lambda)>0$ such that
	$$
	0\leq F(z,x,\lambda)\leq c_6 x^p\ \mbox{for almost all}\ z\in\Omega\ \mbox{and all}\ x\geq0.
	$$
	
	Evidently, in hypothesis $H(f)(iii)$ we can have $v_0\geq0$ (see (\ref{eq3})). Consider the continuous integral functional $i_\lambda: L^p(\Omega)\rightarrow\RR$ defined by
	\begin{eqnarray*}
		&& i_\lambda(v) = \int_\Omega F(z,v(z),\lambda)dz\ \mbox{for all}\ v\in L^p(\Omega),\\
		\Rightarrow && i_\lambda(v_0)>0\ \mbox{for all}\ \lambda>\tilde\lambda>0\ \mbox{(see hypothesis $H(f)(iii)$)}.
	\end{eqnarray*}
	
	Exploiting the density of $W^{1,p}(\Omega)$ in $L^p(\Omega)$, we can find $\tilde{v}_0\in W^{1,p}(\Omega)$, $\tilde{v}_0\geq0$, $\tilde{v}_0\neq0$, such that
	$$
	i_\lambda(\tilde{v}_0)>0\ \mbox{for all}\ \lambda>\tilde\lambda.
	$$
	
	Then using hypothesis $H(f)(iv)$ and Fatou's lemma, we can infer that
	\begin{equation}\label{eq15}
		\lim_{\lambda\rightarrow+\infty}\int_\Omega F(z,\tilde{v}_0,\lambda)dz=+\infty.
	\end{equation}
	
	On the other hand, hypothesis $H(g)(i)$ implies that if $G(z,x)=\int^x_0g(z,s)ds$, then
	\begin{equation}\label{eq16}
		\left|\int_\Omega G(z,\tilde{v}_0)dz\right|\leq c_7\ \mbox{for some}\ c_7>0.
	\end{equation}
	
	Now we see from (\ref{eq15}) and (\ref{eq16}) that for large enough $\lambda>\tilde\lambda$, we have
	\begin{eqnarray*}
		&& \Psi_\lambda(\tilde{v}_0)<0,\\
		\Rightarrow && \Psi_\lambda(u_\lambda)<0=\Psi_\lambda(0)\ \mbox{(see (\ref{eq14}))}\\
		\Rightarrow && u_\lambda\neq0.
	\end{eqnarray*}
	
	From (\ref{eq14}) we have
	\begin{eqnarray}
		&& \Psi'_\lambda(u_\lambda)=0, \nonumber \\
		\Rightarrow && \langle A(u_\lambda),h\rangle + \int_\Omega \xi(z)|u_\lambda|^{p-2}u_\lambda hd\sigma \int_{\partial\Omega}\beta(z)|u_\lambda|^{p-2}u_\lambda hd\sigma - \int_\Omega \mu(u^-_\lambda)^{p-1}hd\sigma \nonumber \\
		= && \int_\Omega[f(z,u_\lambda,\lambda) + g(z,u_\lambda)]hdz\ \mbox{for all}\ h\in W^{1,p}(\Omega). \label{eq17}
	\end{eqnarray}
	
	In (\ref{eq17}) we choose $h=-u^-_\lambda\in W^{1,p}(\Omega)$. Then
	\begin{eqnarray*}
		&& \gamma_p(u^-_\lambda) + \mu||u^-_\lambda||^p_p = 0\ \mbox{(see (\ref{eq3}), (\ref{eq4}))}, \\
		\Rightarrow && c_8||u^-_\lambda||^p\leq0\ \mbox{for some}\ c_8>0\ \mbox{(recall that $\mu>||\xi||_\infty$)}, \\
		\Rightarrow && u_\lambda\geq0, u_\lambda\neq0.
	\end{eqnarray*}
	
	Then it follows from (\ref{eq17}) that $u_\lambda\in S_\lambda\subseteq D_+$ and so for large enough $\lambda>\tilde\lambda$, we have $\lambda\in\mathcal{L}$, hence $\mathcal{L}\neq\emptyset$.
\end{proof}

\begin{prop}\label{prop7}
	For every $\lambda\in\mathcal{L}$ we have $S(\lambda)\subseteq D_+$ and $\lambda^*>0$.
\end{prop}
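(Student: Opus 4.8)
The plan is to prove the two assertions separately, using the regularity theory and maximum principle for the inclusion $S(\lambda)\subseteq D_+$, and a testing-plus-eigenvalue estimate for the lower bound $\lambda^*>0$.

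For the inclusion, fix $\lambda\in\mathcal{L}$ and $u\in S(\lambda)$. First I would establish regularity: since the reaction $f(z,u,\lambda)+g(z,u)$ has subcritical growth (hypotheses $H(f)(i),(ii)$ and $H(g)(i)$), a Moser-type iteration yields $u\in L^\infty(\Omega)$, and then the nonlinear regularity theory for the Robin $p$-Laplacian gives $u\in C^{1,\vartheta}(\overline\Omega)$ for some $\vartheta\in(0,1)$; in particular $u\in C_+\setminus\{0\}$. Next, set $\rho=\|u\|_\infty$ and pick $\mu>\|\xi\|_\infty$ with $\mu\geq\hat\xi_\rho$, where $\hat\xi_\rho$ is the constant furnished by hypothesis $H_0$ for this $\rho$ and $\lambda$. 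Since $x^{p-1}$ is nondecreasing, the map $x\mapsto f(z,x,\lambda)+g(z,x)+\mu x^{p-1}=:\hat f(z,x)$ is nondecreasing on $[0,\rho]$ and vanishes at $x=0$ (recall $f(z,0,\lambda)=g(z,0)=0$), hence $\hat f(z,\cdot)\geq0$ on $[0,\rho]$. Because $0\leq u\leq\rho$, the equation rewrites as $-\Delta_p u+(\xi(z)+\mu)u^{p-1}=\hat f(z,u)\geq0$ a.e. in $\Omega$, so $\Delta_p u\leq(\xi(z)+\mu)u^{p-1}$ with $\xi+\mu>0$. The nonlinear strong maximum principle then forces $u(z)>0$ for all $z\in\Omega$, and a boundary-point (Hopf) argument together with the boundary condition rules out a zero on $\partial\Omega$: a boundary zero $z_0$ would give $\frac{\partial u}{\partial n_p}(z_0)<0$, whereas the condition $\frac{\partial u}{\partial n_p}+\beta u^{p-1}=0$ forces $\frac{\partial u}{\partial n_p}(z_0)=-\beta(z_0)u(z_0)^{p-1}=0$, a contradiction. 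Thus $u\in D_+$, and $S(\lambda)\subseteq D_+$.

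For $\lambda^*>0$ it suffices to produce $\lambda_0>0$ below which \eqref{eqp} has no positive solution. Suppose $u\in S(\lambda)\subseteq D_+$. Testing the weak formulation with $h=u$ gives $\gamma_p(u)=\int_\Omega f(z,u,\lambda)u\,dz+\int_\Omega g(z,u)u\,dz$. Hypotheses $H(g)(ii),(iii)$ combine to yield $g(z,x)\leq\eta_0(z)x^{p-1}$ for all $x\geq0$: the quotient $x\mapsto g(z,x)/x^{p-1}$ is nondecreasing, so its supremum equals its limit at $+\infty$, which is $\leq\eta_0(z)$. Hence $\gamma_p(u)-\int_\Omega\eta_0(z)u^p\,dz\leq\int_\Omega f(z,u,\lambda)u\,dz$, and Lemma~\ref{lem6} (applicable since $\eta_0\leq\hat\lambda_1$ and $\eta_0\not\equiv\hat\lambda_1$) gives $c_0\|u\|^p\leq\int_\Omega f(z,u,\lambda)u\,dz$ for some $c_0>0$.

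The crux, and the step I expect to be the main obstacle, is bounding the last integral by $\epsilon\|u\|^p$ with $\epsilon$ arbitrarily small once $\lambda$ is small, uniformly in $u$. I claim that for each $\epsilon>0$ there is $\lambda_0>0$ with $f(z,x,\lambda)\leq\epsilon x^{p-1}$ for a.a. $z$, all $x\geq0$ and all $\lambda\leq\lambda_0$. The difficulty is that the sublinearity $H(f)(ii)$ is uniform in $z$ but a priori not in $\lambda$, while the vanishing $f(z,x,\lambda)\to0^+$ as $\lambda\to0^+$ from $H(f)(iv)$ is only uniform on bounded $x$-sets. The monotonicity of $\lambda\mapsto f(z,x,\lambda)$ (noted after $H(f)$) bridges the gap: fixing a reference $\lambda_1$ and invoking $H(f)(ii)$ I find $M$ and $\delta$, depending only on $\lambda_1,\epsilon$, with $f(z,x,\lambda)\leq f(z,x,\lambda_1)\leq\epsilon x^{p-1}$ for $x\geq M$ and for $x\leq\delta$; on the remaining compact range $x\in[\delta,M]$, where $x^{p-1}\geq\delta^{p-1}>0$, the uniform-on-bounded-sets vanishing from $H(f)(iv)$ provides $\lambda_0\leq\lambda_1$ such that $f(z,x,\lambda)\leq\epsilon\delta^{p-1}\leq\epsilon x^{p-1}$ for all $\lambda\leq\lambda_0$. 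Granting the claim with $\epsilon=c_0/2$, we obtain $c_0\|u\|^p\leq\tfrac{c_0}{2}\|u\|_p^p\leq\tfrac{c_0}{2}\|u\|^p$, forcing $u=0$, which contradicts $u\in D_+$. Hence \eqref{eqp} admits no positive solution for $\lambda\leq\lambda_0$, so $\lambda^*\geq\lambda_0>0$.
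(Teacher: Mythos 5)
Your proof is correct and follows essentially the same route as the paper: regularity (Moser iteration plus Lieberman) and hypothesis $H_0$ feeding into the nonlinear maximum principle for $S(\lambda)\subseteq D_+$, then testing with $h=u$, the pointwise bounds $g(z,x)\leq(\eta_0(z)+\epsilon)x^{p-1}$ and $f(z,x,\lambda)\leq\epsilon x^{p-1}$ for small $\lambda$, and Lemma~\ref{lem6} to contradict the existence of a positive solution. Your treatment of the small-$\lambda$ bound on $f$ (fixing a reference $\lambda_1$, splitting $[0,\delta]\cup[\delta,M]\cup[M,\infty)$ and using the monotonicity in $\lambda$) is just a careful filling-in of the step the paper states tersely from $H(f)(i),(ii),(iv)$.
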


\begin{proof}
	Let $\lambda\in\mathcal{L}$ and let $u\in S(\lambda)$.
Reasoning as in Papageorgiou \& R\u{a}dulescu \cite{15}, using the nonlinear Green identity, we have
	\begin{equation}\label{eq5}
		\left\{
			\begin{array}{ll}
				-\Delta_p u(z) + \xi(z)u(z)^{p-1} = f(z,u(z),\lambda) + g(z,u(z))\ \mbox{for almost all}\ z\in\Omega,\\
				\frac{\partial u}{\partial n_p} + \beta(z)u^{p-1}=0\ \mbox{on}\ \partial\Omega.
			\end{array}
		\right\}
	\end{equation}
	
  By (\ref{eq5}) and Papageorgiou \& R\u{a}dulescu \cite{16} (see Proposition 7) we have
    $$
    u\in L^\infty(\Omega).
    $$

    Invoking Theorem 2 of Lieberman \cite{12}, we can infer that
    $$
    u\in C_+\backslash\{0\}.
    $$

    Let $\rho=||u||_\infty$ and let $\hat{\xi}^\lambda_\rho>0$ be as postulated by hypothesis $H_0$. Then
    \begin{equation}\label{eq6}
        \Delta_pu(z)\leq\left(||\xi||_\infty+\hat\xi^\lambda_\rho\right)u(z)^{p-1}\ \mbox{for almost all}\ z\in\Omega.
    \end{equation}

    From (\ref{eq6}) and  the nonlinear maximum principle (see, for example, Gasinski \& Papageorgiou \cite[p. 738]{7}), we have
    $$
    \begin{array}{ll}
        & u\in D_+,\\
        \Rightarrow & S(\lambda)\subseteq D_+\ \mbox{for all}\ \lambda>0.
    \end{array}
    $$

    Next, we show that $\lambda^*=\inf\mathcal{L}>0$. Hypotheses $H(f)(i),(ii),(iv)$ imply that given $\epsilon>0$, we can find $\overline\lambda>0$ such that
    \begin{equation}\label{eq7}
        0\leq f(z,x,\overline\lambda)\leq\epsilon x^{p-1}\ \mbox{for almost all}\ z\in\Omega\ \mbox{and all}\ x\geq0.
    \end{equation}

    Hypothesis $H(g)(ii)$ implies that we can find $M,\delta>0$ such that
    \begin{equation}\label{eq8}
        g(z,x)\leq(\eta_0(z)+\epsilon)x^{p-1}\ \mbox{for almost all}\ z\in\Omega\ \mbox{and all}\ x\geq M,\ 0\leq x\leq\delta.
    \end{equation}

    On the other hand, by hypothesis $H(g)(iii)$, we have for almost all $z\in\Omega$ and all $\delta\leq x\leq M$
	\begin{eqnarray}
		& \frac{g(z,x)}{x^{p-1}} & \leq \frac{g(z,M)}{M^{p-1}},\nonumber \\
		\Rightarrow & g(z,x) & \leq \frac{g(z,M)}{M^{p-1}}x^{p-1}\nonumber \\
		& & \leq (\eta_0(z)+\epsilon)x^{p-1}\ \mbox{(see (\ref{eq8}))}. \label{eq9}
	\end{eqnarray}

	So, by (\ref{eq8}) and (\ref{eq9}), we infer that
	\begin{equation}\label{eq10}
		g(z,x) \leq (\eta_0(z)+\epsilon)x^{p-1}\ \mbox{for almost all}\ z\in\Omega\ \mbox{and all}\ x\geq0.
	\end{equation}

	Let $\lambda\in(0,\overline\lambda)$ (see (\ref{eq7})) and assume that $\lambda\in\mathcal{L}$. Then from the first part of the proof, we know that we can find $u_\lambda\in S(\lambda)\subseteq D_+$. For every $h\in W^{1,p}(\Omega)$, $h\geq0$ we have
	\begin{eqnarray}
		&& \langle A(u_\lambda),h \rangle + \int_\Omega\xi(z)u^{p-1}_\lambda hdz + \int_{\partial\Omega} \beta(z)u^{p-1}_\lambda hd\sigma \nonumber \\
		& = & \int_\Omega[f(z,u_\lambda,\lambda) + g(z,u_\lambda)] hdz \nonumber \\
		& \leq & \int_\Omega(\eta_0(z) + 2\epsilon)u^{p-1}_\lambda hdz\ \mbox{(see (\ref{eq7}), (\ref{eq10}) and hypothesis $H(f)(iv)$)}. \label{eq11}
	\end{eqnarray}

	In (\ref{eq11}) we choose $h=u_\lambda\in W^{1,p}(\Omega), u_\lambda\geq0$. Then
	$$
	\begin{array}{ll}
		& \gamma_p(u_\lambda) - \int_\Omega\eta_0(z)u^{p-1}_\lambda dz \leq 2\epsilon||u_\lambda||^p, \\
		\Rightarrow & c_0 \leq 2\epsilon\ \mbox{(see Lemma \ref{lem6})},
	\end{array}
	$$

	Choosing $\epsilon\in(0,\frac{c_0}{2})$, we get a contradiction. Therefore $\lambda\not\in\mathcal{L}$ and so
	$$
	0<\overline\lambda\leq\lambda^*.
	$$
The proof is now complete.
\end{proof}

Next, we show that $\mathcal{L}$ is a half-line.

\begin{prop}\label{prop9}
	Assume that $\lambda\in\mathcal{L}$. Then $[\lambda,+\infty)\subseteq\mathcal{L}$.
\end{prop}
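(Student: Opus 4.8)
The plan is to fix $\lambda\in\mathcal{L}$, pick a solution $u_\lambda\in S(\lambda)\subseteq D_+$ (Proposition~\ref{prop7}), take an arbitrary $\mu>\lambda$, and manufacture a positive solution of \eqref{eqp} for the parameter $\mu$ that lies above $u_\lambda$. The driving observation is that $u_\lambda$ is a subsolution of the $\mu$-problem: since $u_\lambda>0$ on $\overline\Omega$ and $\lambda\mapsto f(z,x,\lambda)$ is increasing (hypothesis $H(f)(iv)$), one has $f(z,u_\lambda,\lambda)\leq f(z,u_\lambda,\mu)$, so the identity satisfied by $u_\lambda$ becomes a favorable inequality once the reaction is read at the parameter $\mu$. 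I would then produce a solution above $u_\lambda$ by a truncation-and-minimization scheme.

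Concretely, put $\rho:=\|u_\lambda\|_\infty$ and, using hypothesis $H_0$, fix a constant $\kappa>\|\xi\|_\infty$ so large that $x\mapsto f(z,x,\mu)+g(z,x)+\kappa x^{p-1}$ is nondecreasing on $[0,\rho]$ (hence, vanishing at $x=0$, nonnegative there). I would truncate this shifted reaction at $u_\lambda$ from below, setting
$$\hat{k}_\mu(z,x)=\begin{cases} f(z,u_\lambda(z),\mu)+g(z,u_\lambda(z))+\kappa\,u_\lambda(z)^{p-1}, & x\leq u_\lambda(z),\\ f(z,x,\mu)+g(z,x)+\kappa\,x^{p-1}, & x>u_\lambda(z),\end{cases}$$
with primitive $\hat{K}_\mu(z,x)=\int_0^x\hat{k}_\mu(z,s)\,ds$, and consider
$$\hat\varphi_\mu(u)=\frac{1}{p}\gamma_p(u)+\frac{\kappa}{p}\|u\|_p^p-\int_\Omega\hat{K}_\mu(z,u)\,dz.$$
The term $\frac{\kappa}{p}\|u\|_p^p$ exactly absorbs the $\kappa x^{p-1}$ growth built into the reaction, so coercivity reduces, on $u^+$, to controlling $\frac1p\gamma_p(u^+)-\frac1p\int_\Omega(\eta_0+\epsilon)(u^+)^p\,dz$ via Lemma~\ref{lem6} (using the sublinearity of $f$ and $\limsup_{x\to+\infty}g(z,x)/x^{p-1}\leq\eta_0<\hat\lambda_1$), and, on $u^-$, to the choice $\kappa>\|\xi\|_\infty$. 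As $\hat\varphi_\mu$ is also sequentially weakly lower semicontinuous, the Weierstrass--Tonelli theorem yields a minimizer $u_\mu$ with $\hat\varphi'_\mu(u_\mu)=0$.

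The crux is to show $u_\lambda\leq u_\mu$. Testing the Euler--Lagrange equation with $-u_\mu^-$ and using $\hat{k}_\mu\geq0$ on $(-\infty,u_\lambda]$ gives $\gamma_p(u_\mu^-)+\kappa\|u_\mu^-\|_p^p\leq0$, whence $u_\mu\geq0$. I would then test both the equation for $u_\mu$ and the equation for $u_\lambda$ --- each written with the common operator $-\Delta_p(\cdot)+(\xi+\kappa)(\cdot)^{p-1}$ --- against $h=(u_\lambda-u_\mu)^+$ and subtract. On $\{u_\lambda>u_\mu\}$ the truncation freezes $\hat{k}_\mu(z,u_\mu)$ at its value at $u_\lambda$, so the two shifted reactions cancel and the right-hand side collapses to $\int_{\{u_\lambda>u_\mu\}}[f(z,u_\lambda,\lambda)-f(z,u_\lambda,\mu)](u_\lambda-u_\mu)^+\,dz\leq0$. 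On the left-hand side the $A$-difference is $\geq0$ by monotonicity of $A$, the boundary term is $\geq0$ since $\beta\geq0$, and --- the decisive point --- the potential term $\int_\Omega(\xi+\kappa)(u_\lambda^{p-1}-u_\mu^{p-1})(u_\lambda-u_\mu)^+\,dz$ is now $\geq0$ because $\xi+\kappa>0$. Hence every term vanishes, forcing $|\{u_\lambda>u_\mu\}|=0$, i.e. $u_\lambda\leq u_\mu$.

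Once $u_\lambda\leq u_\mu$ is secured, the truncation is inactive, so after cancelling $\kappa u_\mu^{p-1}$ from both sides the Euler--Lagrange equation for $u_\mu$ is precisely the weak formulation of \eqref{eqp} at the parameter $\mu$; thus $u_\mu$ is a positive solution and, by Proposition~\ref{prop7}, $u_\mu\in S(\mu)\subseteq D_+$, so $\mu\in\mathcal{L}$. Since $\mu>\lambda$ was arbitrary and $\lambda\in\mathcal{L}$, this gives $[\lambda,+\infty)\subseteq\mathcal{L}$. The principal obstacle throughout is the indefinite potential $\xi$: the naive comparison breaks down because $\int_\Omega\xi(\cdots)\,dz$ may be negative, and it is exactly the $\kappa$-shift with $\kappa>\|\xi\|_\infty$ (made consistent with the monotonicity in $H_0$) that restores the correct sign and lets the weak comparison go through.
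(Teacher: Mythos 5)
Your proposal is correct and follows essentially the same route as the paper: truncate the $\mu$-reaction from below at the known solution $u_\lambda$, shift by $\kappa x^{p-1}$ with $\kappa>\|\xi\|_\infty$, minimize the resulting coercive, sequentially weakly lower semicontinuous functional, and test the Euler--Lagrange equation against $(u_\lambda-u_\mu)^+$ using the monotonicity of $\lambda\mapsto f(z,x,\lambda)$ from $H(f)(iv)$ to force $u_\lambda\leq u_\mu$, after which the truncation is inactive and $u_\mu\in S(\mu)$. The only cosmetic differences are notational (the paper writes $\vartheta$ for your $\mu$ and $\mu$ for your $\kappa$) and that you spell out the term-by-term sign analysis in the comparison step, which the paper compresses into an appeal to Proposition~\ref{prop2}.
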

\begin{proof}
	Since $\lambda\in\mathcal{L}$, we can find $u_{\lambda}\in S(\lambda)\subseteq D_+$ (see Proposition \ref{prop7}). Let $\vartheta>\lambda$ and consider the following truncation-perturbation of the reaction in problem $(P_{\vartheta})$:
	\begin{eqnarray}\label{eq18}
		&&\hat{k}_{\vartheta}(z,x)=\left\{\begin{array}{ll}\
			f(z,u_{\lambda}(z),\vartheta)+g(z,u_{\lambda}(z))+\mu u_{\lambda}(z)^{p-1}&\mbox{if}\ x\leq u_{\lambda}(z)\\
			f(z,x,\vartheta)+g(z,x)+\mu x^{p-1}&\mbox{if}\ u_{\lambda}(z)<x.
		\end{array}\right.
	\end{eqnarray}
	
	Recall that $\mu>||\xi||_{\infty}$. We set $\hat{K}_{\vartheta}(z,x)=\int^x_0\hat{k}_{\vartheta}(z,s)ds$ and consider the $C^1$-functional $\hat{\psi}_{\vartheta}:W^{1,p}(\Omega)\rightarrow\RR$ defined by
	$$\hat{\psi}_{\vartheta}(u)=\frac{1}{p}\gamma_p(u)+\frac{\mu}{p}||u||^p_p-\int_{\Omega}\hat{K}_{\vartheta}(z,u)dz\ \mbox{for all}\ u\in W^{1,p}(\Omega).$$
	
	Reasoning as in the proof of Proposition \ref{prop8}, we can show that
	\begin{itemize}
		\item	$\hat{\psi}_{\vartheta}(\cdot)$ is coercive;
		\item $\hat{\psi}_{\vartheta}(\cdot)$ is sequentially weakly lower semicontinuous.
	\end{itemize}
	
	So, we can find $u_{\vartheta}\in W^{1,p}(\Omega)$ such that
	\begin{eqnarray*}
		&&\hat{\psi}_{\vartheta}(u_{\vartheta})=\inf\left\{\hat{\psi}_{\vartheta}(u):u\in W^{1,p}(\Omega)\right\},\\
		&\Rightarrow&\hat{\psi}'_{\vartheta}(u_{\vartheta})=0,
	\end{eqnarray*}
	\begin{eqnarray}\label{eq19}
		&\Rightarrow&\left\langle A(u_{\vartheta}),h\right\rangle+\int_{\Omega}(\xi(z)+\mu)|u_{\vartheta}|^{p-2}u_{\vartheta}hdz+\int_{\partial\Omega}\beta(z)|u_{\vartheta}|^{p-2}u_{\vartheta}hd\sigma=\nonumber\\
		&&\hspace{1cm}\int_{\Omega}\hat{k}_{\vartheta}(z,u_{\vartheta})hdz\ \mbox{for all}\ W^{1,p}(\Omega).
	\end{eqnarray}
	
	In (\ref{eq19}) we choose $h=(u_{\lambda}-u_{\vartheta})^+\in W^{1,p}(\Omega)$. Then we have
	\begin{eqnarray*}
		&&\left\langle A(u_{\vartheta}),(u_\lambda-u_{\vartheta})^+\right\rangle+\int_{\Omega}(\xi(z)+\mu)|u_{\vartheta}|^{p-2}u_{\vartheta}(u_\lambda-u_{\vartheta})^+dz+\\
		&&\hspace{1cm}\int_{\partial\Omega}\beta(z)|u_{\vartheta}|^{p-2}u_{\vartheta}(u_\lambda-u_{\vartheta})^+d\sigma\\
		&=&\int_{\Omega}[f(z,u_\lambda,\vartheta)+g(z,u_\lambda)+\mu u_\lambda^{p-1}](u_\lambda-u_{\vartheta})^+dz\ (\mbox{see (\ref{eq18})})\\
		&\geq&\int_\Omega[f(z,u_\lambda,\lambda)+g(z,u_\lambda)+\mu_\lambda^{p-1}](u_\lambda-u_{\vartheta})^+dz\ (\mbox{since}\ \lambda<\vartheta,\\
		&&\mbox{see hypothesis}\ H(f)(iv))\\
		&=&\left\langle A(u_\lambda),(u_\lambda-u_{\vartheta})^+\right\rangle+
\int_\Omega(\xi(z)+\mu)u_\lambda^{p-1}(u_\lambda-u_{\vartheta})^+dz+\int_{\partial\Omega}\beta(z)u_\lambda^{p-1}(u_\lambda-u_{\vartheta})^+d\sigma\\
		&&(\mbox{since}\ u_\lambda\in S(\lambda)),\\
		&\Rightarrow&u_\lambda\leq u_{\vartheta}\ (\mbox{see Proposition \ref{prop2} and recall that}\ \mu>||\xi||_{\infty}).
	\end{eqnarray*}
	
	Now equation (\ref{eq19}) becomes
	\begin{eqnarray*}
		&&\left\langle A(u_{\vartheta}),h\right\rangle+\int_\Omega\xi(z)u_{\vartheta}^{p-1}hdz+\int_{\partial\Omega}\beta(z)u_{\vartheta}^{p-1}hd\sigma=\int_\Omega[f(z,u_{\vartheta},\vartheta)+g(z,u_{\vartheta})]hdz\\
		&&\mbox{for all}\ h\in W^{1,p}(\Omega),\\
		&\Rightarrow&u_{\vartheta}\in S(\vartheta)\subseteq D_+\ \mbox{and so}\ \vartheta\in\mathcal{L}.
	\end{eqnarray*}
	
	Therefore we conclude that
	$$\left[\lambda,+\infty\right)\subseteq\mathcal{L}.$$
The proof is now complete.
\end{proof}

An interesting byproduct of the above proof is the following corollary.
\begin{corollary}\label{cor10}
	If hypotheses $H(\xi),H(\beta),H(f),H(g),H_0$ hold, $\lambda\in\mathcal{L},\vartheta>\lambda$, and $u_\lambda\in S(\lambda)\subseteq D_+$, then $\vartheta\in\mathcal{L}$ and we can find $u_\vartheta\in S(\vartheta)\subseteq D_+$ such that $u_\lambda\leq u_\vartheta,u_\vartheta\neq u_\lambda$.
\end{corollary}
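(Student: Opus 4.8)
The plan is to reuse, almost verbatim, the construction carried out in the proof of Proposition~\ref{prop9}. Starting from $u_\lambda\in S(\lambda)\subseteq D_+$ and a fixed $\vartheta>\lambda$, that proof already produces a solution $u_\vartheta\in S(\vartheta)\subseteq D_+$ satisfying $u_\lambda\le u_\vartheta$ (obtained by minimizing the truncated--perturbed functional $\hat\psi_\vartheta$ and testing equation~\eqref{eq19} with $h=(u_\lambda-u_\vartheta)^+$). Thus the assertions $\vartheta\in\mathcal{L}$, $u_\vartheta\in S(\vartheta)\subseteq D_+$ and $u_\lambda\le u_\vartheta$ are already in hand; the only genuinely new point is the strict relation $u_\vartheta\neq u_\lambda$.

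I would establish this by contradiction. Suppose $u_\vartheta=u_\lambda$. Then $u_\lambda$ is simultaneously a positive solution of $(P_\lambda)$ and of $(P_\vartheta)$, so for every $h\in W^{1,p}(\Omega)$ both weak formulations hold, with the identical left-hand side $\langle A(u_\lambda),h\rangle+\int_\Omega\xi(z)u_\lambda^{p-1}h\,dz+\int_{\partial\Omega}\beta(z)u_\lambda^{p-1}h\,d\sigma$. Subtracting the two identities, the operator terms, the potential term and the $g$-term all cancel, leaving
$$
\int_\Omega\big[f(z,u_\lambda,\vartheta)-f(z,u_\lambda,\lambda)\big]h\,dz=0\quad\text{for all }h\in W^{1,p}(\Omega),
$$
whence $f(z,u_\lambda(z),\vartheta)=f(z,u_\lambda(z),\lambda)$ for almost all $z\in\Omega$.

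To reach the contradiction I invoke the strict $\lambda$-monotonicity contained in hypothesis $H(f)(iv)$. Since $u_\lambda\in D_+$ and $\overline\Omega$ is compact, $s:=\min_{\overline\Omega}u_\lambda>0$, so that $u_\lambda(z)\ge s$ for all $z\in\overline\Omega$. Applying the second bullet of $H(f)(iv)$ with this $s$ and with $0<\lambda<\vartheta$, we obtain $\tilde\eta_s>0$ such that $f(z,u_\lambda(z),\vartheta)-f(z,u_\lambda(z),\lambda)\ge\tilde\eta_s>0$ for almost all $z\in\Omega$. This contradicts the equality derived above, and therefore $u_\vartheta\neq u_\lambda$, as claimed.

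The argument is short; the only step requiring care is the cancellation in the subtraction of the two weak formulations, which hinges on the fact that the differential operator, the potential $\xi$ and the perturbation $g$ are all independent of the parameter, so that only the difference $f(\cdot,u_\lambda,\vartheta)-f(\cdot,u_\lambda,\lambda)$ survives. Once this is observed, the strict positivity furnished by $u_\lambda\in D_+$ together with the strict monotonicity of $f$ in $\lambda$ closes the proof at once. No further regularity or comparison machinery is needed for the weak conclusion $u_\vartheta\neq u_\lambda$, although Proposition~\ref{prop4} could be used afterwards to upgrade it to $u_\vartheta-u_\lambda\in\mathrm{int}\,C_+$.
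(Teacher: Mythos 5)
Your proposal is correct and follows essentially the same route as the paper: the paper presents Corollary~\ref{cor10} precisely as a byproduct of the construction in the proof of Proposition~\ref{prop9} (which yields $\vartheta\in\mathcal{L}$, $u_\vartheta\in S(\vartheta)\subseteq D_+$ and $u_\lambda\le u_\vartheta$), and the strictness $u_\vartheta\neq u_\lambda$ rests on the same lower bound $\tilde\eta_s>0$ from the second bullet of $H(f)(iv)$ that the paper itself invokes in Proposition~\ref{prop11}. Your contradiction argument for $u_\vartheta\neq u_\lambda$ is a clean, explicit way of recording what the paper leaves implicit.
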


In fact, we can improve  the conclusion of this corollary as follows.
\begin{prop}\label{prop11}
	Assume that $\lambda\in\mathcal{L}$, $\vartheta>\lambda$, and $u_\lambda\in S(\lambda) \subseteq D_+$. Then $\vartheta\in\mathcal{L}$ and we can find $u_\vartheta\in S(\vartheta)\subseteq D_+$ such that $u_\vartheta-u_\lambda\in {\rm int}\, C_+$.
\end{prop}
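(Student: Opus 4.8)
The plan is to take the weak comparison already supplied by Corollary~\ref{cor10} and sharpen it to an interior one by means of the strong comparison principle, Proposition~\ref{prop4}. By Corollary~\ref{cor10} we already have $\vartheta\in\mathcal{L}$ together with some $u_\vartheta\in S(\vartheta)\subseteq D_+$ satisfying $u_\lambda\leq u_\vartheta$; what remains is only to upgrade this to $u_\vartheta-u_\lambda\in{\rm int}\,C_+$. Since $u_\lambda,u_\vartheta\in D_+$ they are strictly positive on $\overline\Omega$, and by the nonlinear regularity theory (Lieberman~\cite{12}, exactly as in the proof of Proposition~\ref{prop7}) both belong to $C^{1,\tau}(\overline\Omega)$ for some $\tau\in(0,1)$; thus the regularity requirements of Proposition~\ref{prop4} are in place.

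I would then rewrite both equations in the format demanded by Proposition~\ref{prop4}. Put $\rho=||u_\vartheta||_\infty$, let $\hat\xi_\rho^\vartheta>0$ be the constant furnished by $H_0$ (with parameter bound $\vartheta$), and fix $\mu>\max\{||\xi||_\infty,\hat\xi_\rho^\vartheta\}$. Adding $\mu(\cdot)^{p-1}$ to both sides of the equations satisfied by $u_\lambda$ and $u_\vartheta$ yields, a.e.\ in $\Omega$,
$$-\Delta_p u_\lambda+(\xi(z)+\mu)u_\lambda^{p-1}=h_1(z),\qquad -\Delta_p u_\vartheta+(\xi(z)+\mu)u_\vartheta^{p-1}=h_2(z),$$
with common nonnegative potential $\xi(\cdot)+\mu$ and right-hand sides $h_1=f(z,u_\lambda,\lambda)+g(z,u_\lambda)+\mu u_\lambda^{p-1}$, $h_2=f(z,u_\vartheta,\vartheta)+g(z,u_\vartheta)+\mu u_\vartheta^{p-1}$, which lie in $L^\infty(\Omega)$ by $H(f)(i)$, $H(g)(i)$ and $u_\lambda,u_\vartheta\in L^\infty(\Omega)$.

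The heart of the matter is the strict pointwise gap $h_2-h_1\geq\eta>0$. I would introduce the intermediate term $f(z,u_\vartheta,\lambda)$ and write
$$h_2-h_1=\bigl[f(z,u_\vartheta,\vartheta)-f(z,u_\vartheta,\lambda)\bigr]+\bigl[(f(z,u_\vartheta,\lambda)+g(z,u_\vartheta)+\mu u_\vartheta^{p-1})-(f(z,u_\lambda,\lambda)+g(z,u_\lambda)+\mu u_\lambda^{p-1})\bigr].$$
As $u_\lambda\in D_+$ attains a positive minimum $s:=\min_{\overline\Omega}u_\lambda>0$ and $u_\vartheta\geq u_\lambda\geq s$, hypothesis $H(f)(iv)$ (applied with the parameters $\lambda<\vartheta$ and $x=u_\vartheta\geq s$) bounds the first bracket below by $\tilde\eta_s>0$. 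For the second bracket, the choice $\mu\geq\hat\xi_\rho^\vartheta$ makes $x\mapsto f(z,x,\lambda)+g(z,x)+\mu x^{p-1}$ nondecreasing on $[0,\rho]$ by $H_0$ (the extra term $(\mu-\hat\xi_\rho^\vartheta)x^{p-1}$ is itself nondecreasing on $\RR_+$), and since $0\leq u_\lambda\leq u_\vartheta\leq\rho$ this bracket is $\geq0$. Hence $h_2-h_1\geq\tilde\eta_s$ a.e.\ in $\Omega$.

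Taking $\eta=\tilde\eta_s$, every hypothesis of Proposition~\ref{prop4} is satisfied, so $u_\vartheta-u_\lambda\in{\rm int}\,C_+$, which finishes the proof. The one genuinely delicate step is the lower bound on $h_2-h_1$: it is precisely the strict parametric monotonicity $H(f)(iv)$ that converts the plain order $u_\lambda\leq u_\vartheta$ into a strict gap, while $H_0$ (through a sufficiently large $\mu$) ensures the remaining difference of reaction terms cannot cancel it. The regularity, the $L^\infty$-bounds and the reformulation of the equations are all routine.
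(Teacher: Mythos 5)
Your proposal is correct and follows essentially the same route as the paper: start from the ordered pair $u_\lambda\leq u_\vartheta$ given by Corollary~\ref{cor10}, rewrite both equations with the shifted potential, split the difference of right-hand sides into the $H_0$-monotonicity part (nonnegative) and the parametric part (bounded below by $\tilde\eta_s>0$ via $H(f)(iv)$), and invoke Proposition~\ref{prop4}. The only cosmetic differences are that the paper absorbs your $\mu$ by assuming $\hat\xi_\rho^\vartheta>\|\xi\|_\infty$ directly and takes $s=\min_{\overline\Omega}u_\vartheta$ rather than $\min_{\overline\Omega}u_\lambda$, neither of which changes the argument.
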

\begin{proof}
	From Corollary \ref{cor10} we already know that $\vartheta\in\mathcal{L}$ and that there exists $u_\vartheta\in S(\vartheta)\subseteq D_+$ such that
	$$u_\vartheta-u_\lambda\in C_+\backslash\{0\}.$$
	
	Let $\rho=||u_\vartheta||_\infty$ and $\hat{\xi}_\rho^\vartheta>0$ as in $H_0$. We can always assume that $\hat{\xi}_\rho^\vartheta>||\xi||_\infty$. We have
	\begin{eqnarray}\label{eq20}
		&&-\Delta_pu_\lambda+(\xi(z)+\hat{\xi}_\rho^\vartheta)u_\lambda^{p-1}\nonumber\\
		&=&f(z,u_\lambda,\lambda)+g(z,u_\lambda)+\hat{\xi}_\rho^\vartheta u_\lambda^{p-1}\nonumber\\
		&\leq&f(z,u_\vartheta,\lambda)+g(z,u_\vartheta)+\hat{\xi}_\rho^\vartheta u_\vartheta^{p-1}\ (\mbox{see hypothesis}\ H_0\ \mbox{and recall that}\ \lambda<\vartheta)\nonumber\\
		&=&f(z,u_\vartheta,\vartheta)+g(z,u_\vartheta)+\hat{\xi}_\rho^\vartheta u_\vartheta^{p-1}-[f(z,u_\vartheta,\vartheta)-f(z,u_\vartheta,\lambda)]\nonumber\\
		&\leq&f(z,u_\vartheta,\vartheta)+g(z,u_\vartheta)+\hat{\xi}_\rho^{\vartheta}u_\vartheta^{p-1}-\tilde{\eta}_s\nonumber\\
		&&\mbox{with}\ 0<s=\min\limits_{\overline{\Omega}}u_\vartheta\ (\mbox{recall that}\ u_\vartheta\in D_+\ \mbox{and see hypothesis}\ H(f)(iv))\nonumber\\
		&<&f(z,u_\vartheta,\vartheta)+g(z,u_\vartheta)+\hat{\xi}_\rho^\vartheta u_\vartheta^{p-1}\nonumber\\
		&=&-\Delta_pu_\vartheta+(\xi(z)+\hat{\xi}_\rho^\vartheta)u_\vartheta^{p-1}\ \mbox{for almost all}\ z\in\Omega\ (\mbox{since}\ u_\vartheta\in S(\vartheta)).
	\end{eqnarray}
	
	Since $\tilde{\eta}_s>0$, from (\ref{eq20}) and Proposition \ref{prop4}, we infer that
	$$u_\vartheta-u_\lambda\in {\rm int}\, C_+.$$
The proof is complete.
\end{proof}

Now let $\lambda>\lambda^*$. By Proposition \ref{prop9} we know that $\lambda\in\mathcal{L}$. We show that problem \eqref{eqp} has at least two positive solutions.
\begin{prop}\label{prop12}
	If  $\lambda>\lambda^*$, then problem \eqref{eqp} has at least two positive solutions
	$$u_0,\ \hat{u}\in D_+,\ u_0\neq\hat{u}.$$
\end{prop}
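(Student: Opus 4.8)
The plan is to produce the first solution $u_0$ by anchoring a sub-solution at a parameter strictly below $\lambda$, to recognize $u_0$ as a local minimizer of the energy functional, and then to extract a second solution from the mountain pass theorem, exploiting that the origin is a second local minimizer.

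First I would fix $\tau$ with $\lambda^*<\tau<\lambda$, which is possible precisely because $\lambda>\lambda^*$. By Proposition \ref{prop9} we have $\tau\in\mathcal{L}$, so there is $u_\tau\in S(\tau)\subseteq D_+$, and Proposition \ref{prop11} applied to the pair $\tau<\lambda$ yields a solution $u_0\in S(\lambda)\subseteq D_+$ with
$$u_0-u_\tau\in{\rm int}\,C_+.$$
This is the first positive solution. I would then show that $u_0$ is a local minimizer of the energy functional
$$\varphi_\lambda(u)=\frac{1}{p}\gamma_p(u)+\frac{\mu}{p}\|u^-\|_p^p-\int_\Omega[F(z,u^+,\lambda)+G(z,u^+)]\,dz,\qquad \mu>\|\xi\|_\infty,$$
whose nontrivial critical points lie in $S(\lambda)$ (as in Proposition \ref{prop8}). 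To this end I would reuse the lower truncation at $u_\tau$ from the proof of Proposition \ref{prop9}: the associated functional $\hat\varphi_\lambda$ is coercive and sequentially weakly lower semicontinuous, and $u_0$ is its global minimizer. A direct computation shows that on the region $\{v\geq u_\tau\}$ the functionals $\hat\varphi_\lambda$ and $\varphi_\lambda$ coincide up to an additive constant (the $\mu$-terms cancel). Since $u_0-u_\tau\in{\rm int}\,C_+$, every $v$ in a small $C^1(\overline\Omega)$-ball around $u_0$ satisfies $v\geq u_\tau$; hence $u_0$ is a local $C^1(\overline\Omega)$-minimizer of $\varphi_\lambda$, and Proposition \ref{prop3} upgrades this to a local $W^{1,p}(\Omega)$-minimizer.

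Next I would observe that $u=0$ is also a local minimizer of $\varphi_\lambda$. Hypotheses $H(f)(ii)$ and $H(g)(ii)$ give, for small $x\geq0$, the bound $F(z,x,\lambda)+G(z,x)\leq\frac{1}{p}(\eta_0(z)+2\epsilon)x^p$, so for $h$ with $\|h\|_{C^1(\overline\Omega)}$ small (hence $\|h\|_\infty$ small) Lemma \ref{lem6} gives $\varphi_\lambda(h)\geq\frac{c_0-2\epsilon}{p}\|h\|^p\geq0=\varphi_\lambda(0)$ after choosing $\epsilon\in(0,c_0/2)$ (the negative part of $h$ is controlled by the $\mu$-term since $\mu>\|\xi\|_\infty$). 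As before, Proposition \ref{prop3} makes $0$ a local $W^{1,p}(\Omega)$-minimizer. Finally, $\varphi_\lambda$ is coercive (Proposition \ref{prop8}), so its PS-sequences are bounded, and the $(S)_+$-property of $A$ (Proposition \ref{prop2}) together with the compactness noted in the Remark after Theorem \ref{th1} yields a strongly convergent subsequence; thus $\varphi_\lambda$ satisfies the PS-condition. With two distinct local minimizers $0$ and $u_0$, the standard argument combining two local minima with the mountain pass theorem (Theorem \ref{th1}) produces a critical point $\hat u\in K_{\varphi_\lambda}$ with $\hat u\notin\{0,u_0\}$; if $u_0$ were not isolated in $K_{\varphi_\lambda}$ we would already have infinitely many positive solutions, so we may assume isolation. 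Testing $\varphi_\lambda'(\hat u)=0$ with $-\hat u^-$ forces $\hat u\geq0$, so $\hat u\neq0$ gives $\hat u\in S(\lambda)\subseteq D_+$, a second positive solution distinct from $u_0$.

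I expect the main obstacle to be the second step: transferring the minimality of $u_0$ from the truncated functional $\hat\varphi_\lambda$ to the genuine functional $\varphi_\lambda$. This is exactly where the strict ordering $u_0-u_\tau\in{\rm int}\,C_+$ — and hence the strict inequality $\lambda^*<\tau<\lambda$ — is indispensable, since it forces a whole $C^1(\overline\Omega)$-neighborhood of $u_0$ to lie in the region where the two functionals differ only by a constant, so that Proposition \ref{prop3} applies. A secondary point requiring care is ensuring that the mountain pass point is a genuine third critical point (distinct from both $0$ and $u_0$ and nontrivial); this is settled by the dichotomy on whether $u_0$ is isolated in $K_{\varphi_\lambda}$ together with the sign argument placing $\hat u$ in $D_+$.
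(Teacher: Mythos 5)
Your argument is correct and follows the same architecture as the paper's proof: anchor a truncation at a solution for a strictly smaller parameter, obtain $u_0$ as the global minimizer of the truncated functional, use the strong comparison $u_0-u_\tau\in{\rm int}\,C_+$ plus Proposition \ref{prop3} to recognize $u_0$ as a local $W^{1,p}(\Omega)$-minimizer of the untruncated functional, check that $0$ is a second local minimizer via Lemma \ref{lem6}, and finish with the mountain pass theorem. The one genuine deviation is the truncation itself: the paper fixes $\lambda^*<\eta<\lambda<\vartheta$ and truncates on \emph{both} sides, working on the order interval $[u_\eta,u_\vartheta]$ and showing $u_0\in{\rm int}_{C^1(\overline\Omega)}[u_\eta,u_\vartheta]$ by two applications of Proposition \ref{prop4}, whereas you truncate only from below at $u_\tau$. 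Your one-sided version is legitimate and slightly leaner: coercivity of the lower-truncated functional does not require the upper barrier (it is exactly what is verified in the proof of Proposition \ref{prop9}, using $H(f)(ii)$, $H(g)(ii)$ and Lemma \ref{lem6}), the solution produced by Proposition \ref{prop11} for the pair $\tau<\lambda$ is indeed that functional's global minimizer (this requires invoking the \emph{proofs} of Propositions \ref{prop9} and \ref{prop11}, not just their statements, but that is how the paper itself proceeds), and the single comparison $u_0-u_\tau\in{\rm int}\,C_+$ already places a full $C^1(\overline\Omega)$-ball around $u_0$ in the region where the truncated and untruncated functionals differ by an additive constant. The remaining steps (the estimate near $0$, the coercivity-based PS-condition, the finiteness dichotomy for the critical set, and the level separation $0=\psi_\lambda(0)\leq\psi_\lambda(u_0)<m_\lambda\leq\psi_\lambda(\hat u)$ that forces $\hat u\notin\{0,u_0\}$) coincide with the paper's.
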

\begin{proof}
	As we have already mentioned, $\lambda\in\mathcal{L}$. Let $\lambda^*<\eta<\lambda<\vartheta$. We have $\eta,\vartheta\in\mathcal{L}$ (see Proposition \ref{prop9}). According to Proposition \ref{prop11}, there are $u_\vartheta\in S(\vartheta)\subseteq D_+$ and $u_\mu\in S(\mu)\subseteq D_+$ such that
	$$u_\vartheta-u_\mu\in {\rm int}\, C_+.$$
	
	We introduce the Carath\'eodory function $l_\lambda(z,x)$ defined by
	\begin{eqnarray}\label{eq21}
		&&l_\lambda(z,x)=\left\{\begin{array}{ll}
			f(z,u_\eta(z),\lambda)+g(z,u_\eta(z))+\mu u_\eta(z)^{p-1}&\mbox{if}\ x<u_\eta(z)\\
			f(z,x,\lambda)+g(z,x)+\mu x^{p-1}&\mbox{if}\ u_\eta(z)\leq x\leq u_\vartheta(z)\\
			f(z,u_\vartheta(z),\lambda)+g(z,x)+\mu u_\vartheta(z)^{p-1}&\mbox{if}\ u_\vartheta(z)<x.
		\end{array}\right.
	\end{eqnarray}
	
	Recall that $\mu>||\xi||_\infty$. We set $L_\lambda(z,x)=\int^x_0l_\lambda(z,s)ds$ and consider the $C^1$-functional $\hat{\varphi}_\lambda:W^{1,p}(\Omega)\rightarrow\RR$ defined by
	$$\hat{\varphi}_\lambda(u)=\frac{1}{p}\gamma_p(u)+\frac{\mu}{p}||u||^p_p-\int_\Omega L_\lambda(z,u)dz\ \mbox{for all}\ u\in W^{1,p}(\Omega).$$
	
	Since $\mu>||\xi||_\infty$, it is clear from (\ref{eq21}) that $\hat{\varphi}_\lambda(\cdot)$ is coercive. Also, it is sequentially weakly lower semicontinuous. So, we can find $u_0\in W^{1,p}(\Omega)$ such that
	\begin{eqnarray}\label{eq22}
		&&\hat{\varphi}_\lambda(u_0)=\inf\left\{\hat{\varphi}_\lambda(u):u\in W^{1,p}(\Omega)\right\},\nonumber\\
		&\Rightarrow&\hat{\varphi}'_\lambda(u_0)=0,\nonumber\\
		&\Rightarrow&\left\langle A(u_0),h\right\rangle+\int_\Omega(\xi(z)+\mu)|u_0|^{p-2}u_0hdz+\int_{\partial\Omega}\beta(z)|u_0|^{p-2}u_0hd\sigma=\nonumber\\
		&&\int_\Omega l_\lambda(z,u_0)hdz\ \mbox{for all}\ h\in W^{1,p}(\Omega).
	\end{eqnarray}
	
	In (\ref{eq22}) we first choose $h=(u_0-u_\vartheta)^+\in W^{1,p}(\Omega)$. Then
	\begin{eqnarray*}
		&&\left\langle A(u_0),(u_0-u_\vartheta)^+\right\rangle+\int_\Omega(\xi(z)+\mu)u_0^{p-1}(u_0-u_\vartheta)^+dz+
\int_{\partial\Omega}\beta(z)u_0^{p-1}(u_0-u_\vartheta)^+d\sigma\\
		&=&\int_\Omega[f(z,u_\vartheta,\lambda)+g(z,u_\vartheta)+\mu u_\vartheta^{p-1}](u_0-u_\vartheta)^+dz\ (\mbox{see (\ref{eq21})})\\
		&\leq&\int_\Omega[f(z,u_\vartheta,\vartheta)+g(z,u_\vartheta)+\mu u_\vartheta^{p-1}](u_0-u_\vartheta)^+dz\\
		&&(\mbox{see hypothesis}\ H(f)(iv)\ \mbox{and recall that}\ \lambda<\vartheta)\\
		&=&\left\langle A(u_\vartheta),(u_0-u_\vartheta)^+\right\rangle+\int_\Omega(\xi(z)+\mu)u_\vartheta^{p-1}(u_0-u_\vartheta)^+dz+\int_{\partial\Omega}\beta(z)u_\vartheta^{p-1}(u_0-u_\vartheta)^+d\sigma\\
		&&(\mbox{since}\ u_\vartheta\in S(\vartheta)),\\
		&\Rightarrow&u_0\leq u_\vartheta\ (\mbox{see Proposition \ref{prop2} and recall that}\ \mu>||\xi||_\infty).
	\end{eqnarray*}
	
	Similarly, if in (\ref{eq22}) we choose $h=(u_\eta-u_0)^+\in W^{1,p}(\Omega)$, we can show that
	$$u_\eta\leq u_0.$$
	
	So, we have proved that
	\begin{equation}\label{eq23}
		u_0\in[u_\eta,u_\vartheta].
	\end{equation}
	
	Then  it follows from (\ref{eq21}), (\ref{eq22}) and (\ref{eq23}) that $u_0\in S(\lambda)\subseteq D_+$. Moreover, arguing as in the proof of Proposition \ref{prop11}, via Proposition \ref{prop4}, we show that
	\begin{eqnarray}\label{eq24}
		&&u_\vartheta-u_0\in {\rm int}\, C_+\ \mbox{and}\ u_0-u_\eta\in {\rm int}\, C_+,\nonumber\\
		&\Rightarrow&u_0\in {\rm int}_{C^1(\overline{\Omega})}[u_\eta,u_\vartheta].
	\end{eqnarray}
	
	Let $\psi_\lambda:W^{1,p}(\Omega)\rightarrow\RR$ be the $C^1$-functional introduced in the proof of Proposition \ref{prop8}. From (\ref{eq21}) it is clear that
	\begin{equation}\label{eq25}
		\left.\psi_\lambda\right|_{[u_\eta,u_\vartheta]}=\left.\hat{\varphi}_\lambda\right|_{[u_\eta,u_\vartheta]}+\hat{k}_\lambda\ \mbox{with}\ \hat{k}_\lambda\in\RR.
	\end{equation}
	
	From (\ref{eq24}) and (\ref{eq25}) it follows that
	\begin{eqnarray}\label{eq26}
		&&u_0\ \mbox{is local}\ C^1(\overline{\Omega})-\mbox{minimizer of}\ \psi_\lambda,\nonumber\\
		&\Rightarrow&u_0\ \mbox{is local}\ W^{1,p}(\Omega)-\mbox{minimizer of}\ \psi_\lambda\ (\mbox{see Proposition \ref{prop3}}).
	\end{eqnarray}
	
	Hypotheses $H(f)(ii)$ and $H(g)(ii)$ imply that given $\epsilon>0$, we can find $\delta>0$ such that
	\begin{equation}\label{eq27}
		F(z,x,\lambda)\leq\frac{\epsilon}{p}x^p,\ G(z,x)\leq\frac{1}{p}(\eta_0(z)+\epsilon)x^p\ \mbox{for almost all}\ z\in\Omega\ \mbox{and all}\ 0\leq x\leq\delta.
	\end{equation}
	
	For all $u\in C^1(\overline{\Omega})$ with $||u||_{C^1(\overline{\Omega})}\leq\delta$, we have
	\begin{eqnarray*}
		&&\psi_\lambda (u)\geq\frac{1}{p}\gamma_p(u^-)+\frac{\mu}{p}||u^-||^p_p+\frac{1}{p}\gamma_p(u^+)-\frac{1}{p}\int_{\Omega}\eta_0(z)(u^+)^pdz-\frac{2\epsilon}{p}||u^+||^p_p\\
		&&(\mbox{see (\ref{eq27}) and recall the definition of}\ \psi_\lambda\ \mbox{in the proof of Proposition \ref{prop8}})\\
		&&\geq c_9||u^-||^p+\frac{1}{p}(c_0-2\epsilon)\,||u^+||^p\ \mbox{for some}\ c_9>0\\
		&&(\mbox{recall that}\ \mu>||\xi||_\infty\ \mbox{and use Lemma \ref{lem6}}).
	\end{eqnarray*}
	
	Choosing $\epsilon\in\left(0,\frac{c_0}{2}\right)$, we conclude that
	\begin{eqnarray}\label{eq28}
		&&\psi_\lambda(u)\geq c_{10}||u||^p\ \mbox{for some}\ c_{10}>0,\ \mbox{all}\ u\in C^1(\overline{\Omega})\ \mbox{with}\ ||u||_{C^1(\overline{\Omega})}\leq\delta,\nonumber\\
		&\Rightarrow&u=0\ \mbox{is a local}\ C^1(\overline{\Omega})-\mbox{minimizer of}\ \psi_\lambda,\nonumber\\
		&\Rightarrow&u=0\ \mbox{is a local}\ W^{1,p}(\Omega)-\mbox{minimizer of}\ \psi_\lambda\ (\mbox{see Proposition \ref{prop3}}).
	\end{eqnarray}
	
	Without any loss of generality, we may assume that
	$$0=\psi_\lambda(0)\leq\psi_\lambda(u_0).$$
	
	The analysis is similar if the opposite inequality holds using (\ref{eq28}) instead of (\ref{eq26}). In addition, we may assume that $K_{\psi_\lambda}$ is finite. Otherwise since $K_{\psi_\lambda}\subseteq D_+\cup\{0\}$, we see that we already have an infinity of positive solutions for problem \eqref{eqp} and so we are done. Then on account of (\ref{eq26}), we can find $\rho\in(0,1)$ small such that
	\begin{equation}\label{eq29}
		0=\psi_\lambda(0)\leq\psi_\lambda(u_0)<\inf\{\psi_\lambda(u):||u-u_0||=\rho\}=m_\lambda,\ ||u_0||>\rho
	\end{equation}
	(see Aizicovici, Papageorgiou \& Staicu \cite{1}, proof of Proposition 29).
	
	From the proof of Proposition \ref{prop8} we know that
	\begin{eqnarray}\label{eq30}
		&&\psi_\lambda(\cdot)\ \mbox{is coercive},\nonumber\\
		&\Rightarrow&\psi_\lambda(\cdot)\ \mbox{satisfies the PS-condition (see Section 2)}.
	\end{eqnarray}
	
	From (\ref{eq29}) and (\ref{eq30}) it follows that we can use Theorem \ref{th1} (the mountain pass theorem). So, we can find $\hat{u}\in W^{1,p}(\Omega)$ such that
	\begin{eqnarray*}
		&&\hat{u}\in K_{\psi_\lambda}\subseteq D_+\cup\{0\}\ \mbox{and}\ 0<m_\lambda\leq\psi_\lambda(\hat{u}),\\
		&\Rightarrow&\hat{u}\in S(\lambda) \subseteq D_+\ \mbox{and}\ \hat{u}\neq u_0\ (\mbox{see (\ref{eq29})}).
	\end{eqnarray*}
The proof is now complete.
\end{proof}

Next, we show that the critical parameter value $\lambda^*>0$ is also admissible  (that is, $\lambda^*\in\mathcal{L}$).
\begin{prop}\label{prop13}
	We have that $\lambda^*\in\mathcal{L}$.
\end{prop}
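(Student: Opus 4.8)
The plan is to realize $\lambda^*$ as a limit of admissible parameters and to pass to the limit in the corresponding solutions, the whole difficulty being concentrated in showing that the limit does not collapse to the trivial solution. Since $\lambda^*=\inf\mathcal{L}$ and $\mathcal{L}$ is a half-line (Proposition \ref{prop9}), I would pick $\lambda_n\in\mathcal{L}$ with $\lambda_n\downarrow\lambda^*$ and, for each $n$, choose $u_n\in S(\lambda_n)\subseteq D_+$. The first step is to show that $\{u_n\}_{n\ge1}$ is bounded in $W^{1,p}(\Omega)$. Fixing the upper bound $\lambda_1$ of the sequence and using the monotonicity of $\lambda\mapsto f(z,\cdot,\lambda)$ from $H(f)(iv)$ together with the $(p-1)$-sublinearity in $H(f)(ii)$, one obtains a bound $f(z,x,\lambda_n)\le\epsilon x^{p-1}+c_\epsilon$ valid for all $x\ge0$ and uniformly in $n$ (since $f(z,x,\lambda_n)\le f(z,x,\lambda_1)$). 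Combined with the estimate \eqref{eq10} for $g$, testing the weak formulation of $(P_{\lambda_n})$ with $h=u_n$ and invoking Lemma \ref{lem6} yields $(c_0-2\epsilon)\|u_n\|^p\le c\,\|u_n\|$, whence boundedness after choosing $\epsilon\in(0,c_0/2)$.

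Next I would extract the limit. By boundedness we may assume $u_n\xrightarrow{w}u_*$ in $W^{1,p}(\Omega)$, with $u_n\to u_*$ in $L^p(\Omega)$ and in $L^p(\partial\Omega)$. Testing the equation with $h=u_n-u_*$ and using the growth bounds shows $\langle A(u_n),u_n-u_*\rangle\to0$, so the $(S)_+$-property of $A$ (Proposition \ref{prop2}) gives $u_n\to u_*$ strongly in $W^{1,p}(\Omega)$. Passing to the limit in the weak formulation, using the continuity of $f$ in its arguments and the domination obtained above, shows that $u_*\ge0$ solves $(P_{\lambda^*})$. Moreover, exactly as in the proof of Proposition \ref{prop7} (via the regularity of Papageorgiou \& R\u{a}dulescu \cite{16} and Lieberman \cite{12}), the $W^{1,p}$-bound upgrades to a uniform bound in $C^{1,\tau}(\overline\Omega)$ for some $\tau\in(0,1)$; by compactness of the embedding $C^{1,\tau}(\overline\Omega)\hookrightarrow C^1(\overline\Omega)$ we may additionally assume $u_n\to u_*$ in $C^1(\overline\Omega)$.

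The decisive point, and the main obstacle, is to rule out $u_*=0$, since a priori the solutions could shrink to the trivial one as $\lambda_n\downarrow\lambda^*$. This is precisely where the strict positivity $\lambda^*>0$ (Proposition \ref{prop7}) and the near-$0^+$ behaviour of the reaction enter. Suppose, for contradiction, that $u_*=0$. Then $u_n\to0$ in $C^1(\overline\Omega)$, so $\|u_n\|_\infty\to0$ and eventually $u_n(z)\le\delta$ for all $z\in\overline\Omega$, where $\delta>0$ is chosen from $H(f)(ii)$ and $H(g)(ii)$ so that $f(z,x,\lambda_n)\le\epsilon x^{p-1}$ and $g(z,x)\le(\eta_0(z)+\epsilon)x^{p-1}$ for $0\le x\le\delta$; the first bound is again uniform in $n$ because $f(z,x,\lambda_n)\le f(z,x,\lambda_1)$ and $f(\cdot,\cdot,\lambda_1)$ is $(p-1)$-sublinear at $0^+$. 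Testing $(P_{\lambda_n})$ with $h=u_n$ then gives $\gamma_p(u_n)-\int_\Omega\eta_0(z)u_n^p\,dz\le2\epsilon\|u_n\|^p$, while Lemma \ref{lem6} forces the left-hand side to be at least $c_0\|u_n\|^p$. Since $u_n\neq0$, this yields $c_0\le2\epsilon$, contradicting $\epsilon<c_0/2$. Hence $u_*\neq0$.

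Finally, $u_*\ge0$, $u_*\neq0$ and $u_*$ solves $(P_{\lambda^*})$, so $u_*\in S(\lambda^*)$, and by Proposition \ref{prop7} in fact $u_*\in D_+$; therefore $\lambda^*\in\mathcal{L}$, as claimed. I expect the boundedness, the $(S)_+$ compactness and the limit passage to be routine adaptations of the estimates already developed in Propositions \ref{prop8} and \ref{prop7}, while the genuinely delicate part is the non-degeneracy step, which rests on combining the uniform near-zero sublinearity of the reaction (to reduce to the behaviour controlled by $\eta_0$) with the spectral gap encoded in Lemma \ref{lem6}.
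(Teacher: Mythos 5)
Your proof is correct and follows essentially the same route as the paper: approximate $\lambda^*$ from above by $\lambda_n\downarrow\lambda^*$, extract a limit of the solutions $u_n\in S(\lambda_n)$ via the uniform $L^\infty$ and $C^{1,\gamma}(\overline\Omega)$ bounds, pass to the limit in the weak formulation, and rule out $u_*=0$ by combining the uniform near-$0^+$ sublinearity of the reaction with the spectral-gap estimate of Lemma \ref{lem6}. The only (harmless) deviation is in the boundedness step: the paper chooses the $u_n$ decreasing (via the construction in Proposition \ref{prop11}), so that $0\le u_n\le u_1$ makes the $W^{1,p}$-bound immediate from the growth conditions, whereas you derive it from the uniform asymptotic estimates together with Lemma \ref{lem6}; both arguments are valid.
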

\begin{proof}
	Let $\{\lambda_n\}_{n\geq 1}\subseteq(\lambda^*,+\infty)$ be such that $\lambda_n\rightarrow(\lambda^*)^+$ as $n\rightarrow\infty$. From the proof of Proposition \ref{prop11}, we know that we can find $u_n\in S(\lambda_n)\subseteq D_+$ ($n\in\NN$) decreasing. We have
	\begin{eqnarray}
		&&0\leq u_n\leq u_1\ \mbox{for all}\ n\in\NN,\label{eq31}\\
		&&\left\langle A(u_n),h\right\rangle+\int_\Omega \xi(z)u_n^{p-1}hdz+\int_{\partial\Omega}\beta(z)u_n^{p-1}hd\sigma=\int_{\Omega}[f(z,u_n,\lambda_n)+g(z,u_n)]hdz\nonumber\\
		&&\mbox{for all}\ h\in W^{1,p}(\Omega)\ \mbox{and all}\ n\in\NN\label{eq32}.
	\end{eqnarray}
	
	In (\ref{eq32}) we choose $h=u_n\in W^{1,p}(\Omega)$. Using (\ref{eq31}) and hypotheses $H(\xi),H(\beta),H(f)(i)$, $H(g)(i)$, we see that
	$$\{u_n\}_{n\geq 1}\subseteq W^{1,p}(\Omega)\ \mbox{is bounded}.$$
	
	Therefore, by passing to a subsequence if necessary, we may assume that
	\begin{equation}\label{eq33}
		u_n\stackrel{w}{\rightarrow}u_{\lambda^*}\ \mbox{in}\ W^{1,p}(\Omega)\ \mbox{and}\ u_n\rightarrow u_{\lambda^*}\ \mbox{in}\ L^p(\Omega)\ \mbox{and}\ L^p(\partial\Omega).
	\end{equation}
	
	For every $n\in\NN$, we have
	\begin{eqnarray}\label{eq34}
		&&-\Delta_pu_n(z)+\xi(z)u_n(z)^{p-1}=f(z,u_n(z),\lambda_n)+g(z,u_n(z))\ \mbox{for almost all}\ z\in\Omega,\nonumber\\
		&&\frac{\partial u}{\partial n_p}+\beta(z)u_n^{p-1}=0\ \mbox{on}\ \partial\Omega\ (\mbox{see Papageorgiou \& R\u{a}dulescu \cite{15}}).
	\end{eqnarray}
	
	From Papageorgiou \& R\u{a}dulescu \cite[Proposition 7]{16}  and (\ref{eq34}), we know that we can find $c_{11}>0$ such that
	$$||u_n||_\infty\leq c_{11}\ \mbox{for all}\ n\in\NN.$$
	
	Then invoking Theorem 2 of Lieberman \cite{12}, we can find $\gamma\in(0,1)$ and $c_{12}>0$ such that
	\begin{equation}\label{eq35}
		u_n\in C^{1,\gamma}(\overline{\Omega})\ \mbox{and}\ ||u_n||_{C^{1,\gamma}(\overline{\Omega})}\leq c_{12}\ \mbox{for all}\ n\in\NN.
	\end{equation}
	
	Since $C^{1,\gamma}(\overline{\Omega})$ is compactly embedded in $C^1(\overline{\Omega})$, we have from (\ref{eq33}) and (\ref{eq35})
	\begin{equation}\label{eq36}
		u_n\rightarrow u_{\lambda^*}\ \mbox{in}\ C^1(\overline{\Omega}).
	\end{equation}
	
	Passing to the limit as $n\rightarrow\infty$ in (\ref{eq32}) and using (\ref{eq36}), we obtain
	\begin{eqnarray}\label{eq37}
		&&\left\langle A(u_{\lambda^*}),h\right\rangle+\int_\Omega\xi(z)u_{\lambda^*}^{p-1}hdz+\int_{\partial\Omega}\beta(z)u_{\lambda^*}^{p-1}hd\sigma=\nonumber\\
		&&\int_\Omega[f(z,u_{\lambda^*},\lambda^*)+g(z,u_{\lambda^*})]hdz\ \mbox{for all}\ h\in W^{1,p}(\Omega),\\
		&\Rightarrow&u_{\lambda^*}\ \mbox{is a nonnegative solution of}\ (P_{\lambda^*}).\nonumber
	\end{eqnarray}
	
	We need to show that $u_{\lambda^*}\neq 0$. Then we will have $u_{\lambda^*}\in S(\lambda^*)\subseteq D_+$ and $\lambda^*\in\mathcal{L}$.
	
	Arguing by contradiction, suppose that $u_{\lambda^*}=0$. Then we have from (\ref{eq36}) 
	\begin{equation}\label{eq38}
		u_n\rightarrow 0\ \mbox{in}\ C^1(\overline{\Omega}).
	\end{equation}
	
	Hypotheses $H(f)(ii)$ and $H(g)(ii)$ imply that given $\epsilon>0$, we can find $\delta=\delta(\epsilon)>0$ such that
	\begin{equation}\label{eq39}
		f(z,x,\lambda_1)x\leq\epsilon x^p,\ g(z,x)x\leq(\eta_0(z)+\epsilon)x^p\ \mbox{for almost all}\ z\in\Omega,\ \mbox{all}\ 0\leq x\leq \delta.
	\end{equation}
	
	In (\ref{eq37}) we choose $h=u_n\in W^{1,p}(\Omega)$. Then
	\begin{eqnarray}\label{eq40}
		\gamma_p(u_n)&=&\int_\Omega[f(z,u_n,\lambda_n)+g(z,u_n)]u_ndz\nonumber\\
		&\leq&\int_{\Omega}[f(z,u_n,\lambda_1)+g(z,u_n)]u_ndz\ \mbox{for all}\ n\in\NN\ (\mbox{see hypothesis}\ H(f)(iv)).
	\end{eqnarray}
	
	From (\ref{eq38}), we see that we can find $n_0\in\NN$ such that
	\begin{equation}\label{eq41}
		u_ n(z)\in\left(0,\delta\right]\ \mbox{for all}\ z\in\overline{\Omega},\ n\geq n_0.
	\end{equation}
	
	Then from (\ref{eq39}), (\ref{eq40}), (\ref{eq41}), we see that
	\begin{eqnarray*}
		&&\gamma_p(u_n)-\int_\Omega\eta_0(z)u_n^pdz\leq 2\epsilon||u_n||^p_p\ \mbox{for all}\ n\geq n_0,\\
		&\Rightarrow&c_0||u_n||^p\leq 2\epsilon||u_n||^p_p\ \mbox{for all}\ n\geq n_0\ (\mbox{see Lemma \ref{lem6}}),\\
		&\Rightarrow&c_0\leq 2\epsilon.
	\end{eqnarray*}
	
	Since $\epsilon>0$ is arbitrary, choosing $\epsilon\in\left(0,\frac{c_0}{2}\right)$, we get a contradiction. Therefore $u_{\lambda^*}\neq 0$ and so $u_{\lambda^*}\in S(\lambda^*)\subseteq D_+$, hence $\lambda^*\in\mathcal{L}$.
\end{proof}

So, we conclude that
$$\mathcal{L}=\left[\lambda^*,+\infty\right).$$

\section{Minimal positive solutions}

In this section we show that for every $\lambda\in\mathcal{L}$, problem \eqref{eqp} has a smallest positive solution $\bar{u}_\lambda\in D_+$ and we study the monotonicity and continuity properties of the map $\lambda\mapsto\bar{u}_\lambda.$

From Papageorgiou, R\u{a}dulescu \& Repov\v{s} \cite{18} (see the proof of Proposition 7), we know that $S(\lambda)$ is downward directed, that is, if $u_1,u_2\in S(\lambda)$, then we can find $u\in S(\lambda)$ such that $u\leq u_1,u\leq u_2$.
\begin{prop}\label{prop15}
	Assume that $\lambda\in\mathcal{L}=\left[\lambda^*,+\infty\right)$. Then problem \eqref{eqp} admits a smallest positive solution $\bar{u}_\lambda\in S(\lambda)\subseteq D_+$ (that is, $\bar{u}_\lambda\leq u$ for all $u\in S(\lambda)$).
\end{prop}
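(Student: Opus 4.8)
The plan is to realize $\bar{u}_\lambda$ as the pointwise infimum of a carefully chosen decreasing sequence of solutions and then to show that this infimum is itself a nontrivial solution. First I would exploit the fact, recalled just above, that $S(\lambda)$ is downward directed. Since $L^p(\Omega)$ is separable, a standard result on downward directed sets produces a decreasing sequence $\{u_n\}_{n\geq1}\subseteq S(\lambda)$ whose pointwise infimum equals $\inf S(\lambda)$. By construction $0\leq u_n\leq u_1$ for all $n\in\NN$, and each $u_n$ satisfies the weak formulation
$$
\langle A(u_n),h\rangle+\int_\Omega\xi(z)u_n^{p-1}h\,dz+\int_{\partial\Omega}\beta(z)u_n^{p-1}h\,d\sigma=\int_\Omega[f(z,u_n,\lambda)+g(z,u_n)]h\,dz
$$
for all $h\in W^{1,p}(\Omega)$.

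Next I would extract compactness exactly as in the proof of Proposition \ref{prop13}. Testing the identity with $h=u_n$ and invoking hypotheses $H(\xi),H(\beta),H(f)(i),H(g)(i)$ together with the uniform bound $u_n\leq u_1$ shows that $\{u_n\}_{n\geq1}\subseteq W^{1,p}(\Omega)$ is bounded; hence, along a subsequence, $u_n\xrightarrow{w}\bar{u}_\lambda$ in $W^{1,p}(\Omega)$ and $u_n\rightarrow\bar{u}_\lambda$ in $L^p(\Omega)$ and $L^p(\partial\Omega)$, with $\bar{u}_\lambda\geq0$. Using the global $L^\infty$-bound of Papageorgiou \& R\u{a}dulescu \cite{16} and the regularity result of Lieberman \cite{12}, one obtains $u_n\in C^{1,\gamma}(\overline\Omega)$ with a uniform bound, so the compact embedding $C^{1,\gamma}(\overline\Omega)\hookrightarrow C^1(\overline\Omega)$ upgrades the convergence to $u_n\rightarrow\bar{u}_\lambda$ in $C^1(\overline\Omega)$. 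Passing to the limit in the weak formulation then shows that $\bar{u}_\lambda$ is a nonnegative weak solution of \eqref{eqp}, and since $u_n$ is decreasing with pointwise infimum $\inf S(\lambda)$, we have $\bar{u}_\lambda=\inf S(\lambda)$.

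The main obstacle is to rule out $\bar{u}_\lambda=0$; once this is done, Proposition \ref{prop7} gives $\bar{u}_\lambda\in S(\lambda)\subseteq D_+$, and $\bar{u}_\lambda=\inf S(\lambda)$ immediately yields $\bar{u}_\lambda\leq u$ for every $u\in S(\lambda)$, so $\bar{u}_\lambda$ is the smallest positive solution. To exclude the trivial limit I would argue by contradiction in the manner of Proposition \ref{prop13}: if $\bar{u}_\lambda=0$, then $u_n\rightarrow0$ in $C^1(\overline\Omega)$, so for large $n$ the values $u_n(z)$ lie in a small neighbourhood of $0$ where $H(f)(ii)$ and $H(g)(ii)$ give $f(z,u_n,\lambda)u_n\leq\epsilon u_n^p$ and $g(z,u_n)u_n\leq(\eta_0(z)+\epsilon)u_n^p$. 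Testing the equation with $h=u_n$ then produces
$$
\gamma_p(u_n)-\int_\Omega\eta_0(z)u_n^p\,dz\leq2\epsilon\|u_n\|_p^p,
$$
and Lemma \ref{lem6} forces $c_0\leq2\epsilon$; choosing $\epsilon\in(0,c_0/2)$ is the desired contradiction. This nonvanishing step, which rests on the nonuniform nonresonance encoded in $H(g)(ii)$ together with Lemma \ref{lem6}, is the only delicate point, the remaining arguments being the same compactness and regularity estimates used above.
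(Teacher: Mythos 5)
Your proposal is correct and follows essentially the same route as the paper: extract a decreasing sequence from the downward directed set $S(\lambda)$ realizing $\inf S(\lambda)$ (the paper cites Lemma 3.10 of Hu \& Papageorgiou \cite{11} for this), then run the compactness, regularity and nonvanishing arguments of Proposition \ref{prop13} to show the $C^1(\overline\Omega)$-limit is a nontrivial solution. The paper compresses the second half into a reference to the proof of Proposition \ref{prop13}; you have simply written it out, including the correct adaptation of the contradiction step to a fixed $\lambda$.
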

\begin{proof}
	According to Lemma 3.10 of Hu \& Papageorgiou \cite[p. 178]{11} and since $S(\lambda)$ is downward directed, we can find a decreasing sequence $\{u_n\}_{n\geq 1}\subseteq S(\lambda)$ such that
	$$\inf S(\lambda)=\inf\limits_{n\geq 1}u_n.$$
	
	We have
	\begin{eqnarray}
		&&0\leq u_n\leq u_1\ \mbox{for all}\ n\in\NN,\label{eq42}\\
		&&\left\langle A(u_n),h\right\rangle+\int_\Omega\xi(z)u_n^{p-1}hdz+\int_{\partial\Omega}\beta(z)u_n^{p-1}hd\sigma=\nonumber\\
		&&\int_\Omega[f(z,u_n.\lambda)+g(z,u_n)]hdz\ \mbox{for all}\ h\in W^{1,p}(\Omega)\ \mbox{and all}\ n\in\NN\label{eq43}.
	\end{eqnarray}
	
	By reasoning as in the proof of Proposition \ref{prop13} (see the part of the proof after (\ref{eq32})) and using (\ref{eq42}) and (\ref{eq43}), we obtain
	\begin{eqnarray*}
		&&u_n\rightarrow \bar{u}_\lambda\ \mbox{in}\ C^1(\overline{\Omega})\ \mbox{with}\ \bar{u}_\lambda\in S(\lambda),\\
		&\Rightarrow&\bar{u}_\lambda=\inf S(\lambda).
	\end{eqnarray*}
The proof is now complete.
\end{proof}

\begin{prop}\label{prop16}
	The map $\lambda\mapsto\bar{u}_\lambda$ from $\overset{o}{\mathcal{L}}=(\lambda^*,+\infty)$ into $C^1(\overline{\Omega})$ has the following properties:
	\begin{itemize}
		\item	it is strictly monotone in the sense that
		$$\overset{o}{\mathcal{L}}\ni\lambda<\vartheta\Rightarrow\bar{u}_\vartheta-\bar{u}_\lambda\in {\rm int}\, C_+;$$
		\item it is left continuous.
	\end{itemize}
\end{prop}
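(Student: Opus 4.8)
The plan is to establish the two asserted properties of the map $\lambda\mapsto\bar{u}_\lambda$ separately, since they rest on different tools developed earlier in the paper.

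\textbf{Strict monotonicity.} Fix $\lambda,\vartheta\in\overset{o}{\mathcal{L}}$ with $\lambda<\vartheta$. The minimal solution $\bar{u}_\lambda\in S(\lambda)\subseteq D_+$ is available from Proposition~\ref{prop15}. The natural move is to invoke Proposition~\ref{prop11} with the base point $u_\lambda=\bar{u}_\lambda$: this yields some $u_\vartheta\in S(\vartheta)\subseteq D_+$ with $u_\vartheta-\bar{u}_\lambda\in {\rm int}\,C_+$. Since $\bar{u}_\vartheta=\inf S(\vartheta)\leq u_\vartheta$, I would then need to transfer the strict ordering from $u_\vartheta$ down to $\bar{u}_\vartheta$. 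The clean way is to run the strong comparison argument of Proposition~\ref{prop11} once more, but now directly comparing $\bar{u}_\lambda$ against $\bar{u}_\vartheta$: writing $\rho=||\bar{u}_\vartheta||_\infty$ and taking $\hat{\xi}_\rho^\vartheta>||\xi||_\infty$ from $H_0$, the chain of inequalities in \eqref{eq20} gives
$$
-\Delta_p\bar{u}_\lambda+(\xi(z)+\hat{\xi}_\rho^\vartheta)\bar{u}_\lambda^{p-1}
\leq
-\Delta_p\bar{u}_\vartheta+(\xi(z)+\hat{\xi}_\rho^\vartheta)\bar{u}_\vartheta^{p-1}-\tilde{\eta}_s,
$$
with $s=\min_{\overline{\Omega}}\bar{u}_\vartheta>0$, provided we already know $\bar{u}_\lambda\leq\bar{u}_\vartheta$. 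That ordering itself follows from $\bar{u}_\lambda\leq u_\vartheta$ for every $u_\vartheta\in S(\vartheta)$ together with the fact that $S(\lambda)\ni\bar{u}_\lambda$ lies below $S(\vartheta)$ via Proposition~\ref{prop11}. With the strict differential inequality in hand, Proposition~\ref{prop4} delivers $\bar{u}_\vartheta-\bar{u}_\lambda\in {\rm int}\,C_+$.

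\textbf{Left continuity.} Fix $\vartheta\in\overset{o}{\mathcal{L}}$ and take $\lambda_n\uparrow\vartheta$ with $\lambda_n<\vartheta$. By monotonicity the sequence $\{\bar{u}_{\lambda_n}\}$ is increasing and bounded above by $\bar{u}_\vartheta$, hence bounded in $W^{1,p}(\Omega)$ after testing the weak formulation with $\bar{u}_{\lambda_n}$ and using $H(\xi),H(\beta),H(f)(i),H(g)(i)$ exactly as in Proposition~\ref{prop13}. I would then reproduce the regularity bootstrap already carried out there: Proposition~7 of \cite{16} gives a uniform $L^\infty$ bound, and Lieberman's theorem \cite{12} promotes this to a uniform $C^{1,\gamma}(\overline{\Omega})$ bound. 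Compactness of the embedding $C^{1,\gamma}(\overline{\Omega})\hookrightarrow C^1(\overline{\Omega})$ then forces $\bar{u}_{\lambda_n}\to\tilde{u}_\vartheta$ in $C^1(\overline{\Omega})$ along a subsequence, and passing to the limit in the weak form shows $\tilde{u}_\vartheta\in S(\vartheta)$.

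\textbf{Main obstacle.} The real work is to identify the limit $\tilde{u}_\vartheta$ with the \emph{minimal} solution $\bar{u}_\vartheta$, rather than with some larger element of $S(\vartheta)$. Since each $\bar{u}_{\lambda_n}\leq\bar{u}_\vartheta$ we get $\tilde{u}_\vartheta\leq\bar{u}_\vartheta$ in the limit; but $\bar{u}_\vartheta$ is the smallest positive solution and $\tilde{u}_\vartheta\in S(\vartheta)$, so the reverse inequality $\bar{u}_\vartheta\leq\tilde{u}_\vartheta$ holds by minimality. Hence $\tilde{u}_\vartheta=\bar{u}_\vartheta$, the limit is independent of the subsequence, and the full sequence converges, giving $\bar{u}_{\lambda_n}\to\bar{u}_\vartheta$ in $C^1(\overline{\Omega})$. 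This establishes left continuity. I expect the only delicate point to be justifying that $\tilde{u}_\vartheta\neq 0$ is not actually needed here (unlike in Proposition~\ref{prop13}), because the monotone lower bound $\bar{u}_{\lambda_n}\geq\bar{u}_{\lambda_1}\in D_+$ already keeps the limit bounded away from zero on compacta, so the limit is automatically a genuine positive solution.
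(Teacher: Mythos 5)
Your left-continuity argument is sound and is essentially the paper's: the monotone bound $\bar u_{\lambda_n}\le\bar u_\vartheta$, the uniform $L^\infty$ and $C^{1,\gamma}(\overline\Omega)$ estimates via \cite{16} and \cite{12}, compactness of the embedding into $C^1(\overline\Omega)$, and then the identification of the limit. The paper phrases that identification contrapositively through a point $z_0$ with $\bar u_\lambda(z_0)<\tilde u_\lambda(z_0)$, while you squeeze $\bar u_\vartheta\le\tilde u_\vartheta\le\bar u_\vartheta$ directly (minimality on one side, passage to the limit in $\bar u_{\lambda_n}\le\bar u_\vartheta$ on the other); these are the same argument. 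Your remark that nontriviality of the limit is automatic here, because of the lower bound $\bar u_{\lambda_1}\in D_+$, is also correct.

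The monotonicity part, however, has a genuine gap. Everything hinges on the ordering $\bar u_\lambda\le\bar u_\vartheta$, and your justification of it --- that ``$\bar u_\lambda\le u_\vartheta$ for every $u_\vartheta\in S(\vartheta)$'' via Proposition \ref{prop11} --- is not what Proposition \ref{prop11} provides. That proposition is an existence statement: given $u_\lambda\in S(\lambda)$ it produces \emph{one} $u_\vartheta\in S(\vartheta)$ lying above it; it says nothing about the rest of $S(\vartheta)$, and in particular nothing about the minimal element $\bar u_\vartheta$, which may a priori lie below the particular $u_\vartheta$ you constructed. Since $\bar u_\vartheta\le u_\vartheta$ points the wrong way, you cannot descend from $u_\vartheta-\bar u_\lambda\in{\rm int}\,C_+$ to $\bar u_\vartheta-\bar u_\lambda\in{\rm int}\,C_+$. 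The paper resolves this by exploiting minimality on the \emph{lower} side, where it helps rather than hurts: using the truncation construction from the proof of Proposition \ref{prop12} with $\bar u_\vartheta$ as the upper barrier, one manufactures some $u_\lambda\in S(\lambda)$ with $\bar u_\vartheta-u_\lambda\in{\rm int}\,C_+$ (see \eqref{eq24}); then minimality of $\bar u_\lambda$ in $S(\lambda)$ gives $\bar u_\lambda\le u_\lambda$, whence $\bar u_\vartheta-\bar u_\lambda=(\bar u_\vartheta-u_\lambda)+(u_\lambda-\bar u_\lambda)\in{\rm int}\,C_+$. Your planned second application of the comparison \eqref{eq20} between $\bar u_\lambda$ and $\bar u_\vartheta$ would indeed finish the job once $\bar u_\lambda\le\bar u_\vartheta$ is in hand, but as written that prerequisite is exactly what is missing.
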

\begin{proof}
	First, we show the strict monotonicity of the map $\lambda\mapsto\bar{u}_\lambda$. So, let $\lambda\in\overset{o}{\mathcal{L}}$ and $\vartheta >\lambda$. Then $\vartheta\in\mathcal{L}$ and let $\bar{u}_\vartheta\in S(\vartheta)\subseteq D_+$ be the minimal solution of problem ($P_\vartheta$). From the proof of Proposition \ref{prop12}, we know that we can find $u_\lambda\in S(\lambda)\subseteq D_+$ such that
	\begin{eqnarray*}
		&&\bar{u}_\vartheta-u_\lambda\in {\rm int}\, C_+\ (\mbox{see (\ref{eq24})}),\\
		&\Rightarrow&\bar{u}_\vartheta-\bar{u}_\lambda\in {\rm int}\, C_+\ (\mbox{since}\ \bar{u}_\lambda\leq u_\lambda).
	\end{eqnarray*}
	
	This proves the strict monotonicity of the map $\lambda\mapsto\bar{u}_\lambda$ from $\overset{o}{\mathcal{L}}=(\lambda^*,+\infty)$ into $C^1(\overline{\Omega})$.
	
	Next, we show the left continuity of the map $\lambda\mapsto\bar{u}_\lambda$. So, let $\{\lambda_n\}_{n\geq 1}\subseteq\overset{o}{\mathcal{L}}$ and assume that $\lambda_n\rightarrow\lambda^-$. From the first part of the proof, we have
	$$0\leq\bar{u}_{\lambda_n}\leq\bar{u}_\lambda\ \mbox{for all}\ n\geq 1$$
	
	Then as before (see the proof of Proposition \ref{prop13}), we can say that
	\begin{equation}\label{eq44}
		\bar{u}_{\lambda_n}\rightarrow\tilde{u}_\lambda\ \mbox{in}\ C^1(\overline{\Omega})\ \mbox{as}\ n\rightarrow\infty\end{equation}
		and
$$\tilde{u}_\lambda\in S(\lambda)\subseteq D_+.$$
	
	Suppose that $\tilde{u}_\lambda\neq\bar{u}_\lambda$. Then we can find $z_0\in\overline{\Omega}$ such that
	\begin{eqnarray*}
		&&\bar{u}_\lambda(z_0)<\tilde{u}_\lambda(z_0),\\
		&\Rightarrow&\bar{u}_\lambda(z_0)<\bar{u}_{\lambda_n}(z_0)\ \mbox{for all}\ n\geq n_0,
	\end{eqnarray*}
	which contradicts the first part of the proposition. Therefore
	\begin{eqnarray*}
		&&\tilde{u}_\lambda=\bar{u}_\lambda,\\
		&\Rightarrow&\lambda\mapsto\bar{u}_\lambda\ \mbox{is continuous from}\ \overset{o}{\mathcal{L}}\ \mbox{into}\ C^1(\overline{\Omega}).
	\end{eqnarray*}
The proof is now complete.
\end{proof}

\begin{remark}
	In our setting the equation was nonuniformly nonresonant as $x\rightarrow+\infty$ (see hypotheses $H(f)(ii),H(g)(ii)$). Is it possible also to treat the resonant case, that is,
	$$\limsup\limits_{x\rightarrow+\infty}\frac{g(z,x)}{x^{p-1}}\leq\hat{\lambda}_1\ \mbox{uniformly for almost all}\ z\in\Omega.$$

However, it is unknown what happens for the asymptotic behavior as $x\rightarrow+\infty$, provided that we are nonresonant with respect to $\hat{\lambda}_1$, but from above the principal eigenvalue, that is,
$$\liminf\limits_{x\rightarrow+\infty}\frac{g(z,x)}{x^{p-1}}\geq\hat{\eta}>\hat{\lambda}_1\ \mbox{uniformly for almost all}\ z\in\Omega.$$

A careful inspection of the arguments of this paper reveals that for the nonresonant case, but from above $\hat{\lambda}_1$, if a bifurcation-type result holds, then it occurs for small values of $\lambda>0$. This also suggests that if we want to extend the results of this paper to the resonant case, we must have resonance from the left of $\hat{\lambda}_1$, in the sense that
$$\hat{\lambda}_1x^{p-1}-[f(z,x,\lambda)+g(z,x)]\rightarrow+\infty\ \mbox{uniformly for almost all}\ z\in\Omega,\ \mbox{as}\ x\rightarrow+\infty.$$

In this way we can preserve the coercivity of the energy functional and we hope to be able to extend the results of paper to the resonant case.
\end{remark}

\medskip
{\bf Acknowledgements.} The authors wish to thank a very knowledgeable referee for his/her corrections and remarks, which have improved our presentation. This research was supported by the Slovenian Research Agency grants
P1-0292, J1-8131, J1-7025, N1-0064, and N1-0083. V.D.~R\u adulescu acknowledges the support through a grant of the Romanian Ministry of Research and Innovation, CNCS--UEFISCDI, project number PN-III-P4-ID-PCE-2016-0130,
within PNCDI III.

\end{document}